\numberwithin{equation}{section}
\DeclareSymbolFontAlphabet{\mathbb}{AMSb}
\DeclareSymbolFontAlphabet{\mathbbl}{bbold}
\newtheorem{thm}{Theorem}[section]
\newtheorem{lem}[thm]{Lemma}
\newtheorem{prop}[thm]{Proposition}
\newtheorem{cor}[thm]{Corollary}
\theoremstyle{definition}
\newtheorem{defn}[thm]{Definition}
\theoremstyle{remarks}
\newtheorem*{rem*}{Remarks}
\renewcommand{\t}{\mathfrak{t}}
\newcommand{\s}{\mathfrak{s}}
\newcommand{\Tr}{\mathrm{Tr}}
\newcommand{\sgn}{\mathrm{sgn}}
\newcommand{\orbit}[2]{#1/#2}
\newcommand{\sym}[1]{\mathfrak{S}_{#1}}
\newcommand{\Inf}{\mathrm{Inf}}
\newcommand{\Y}{\mathrm{Y}}
\newcommand{\C}{\mathscr{C}}
\newcommand{\NN}{\mathbb{N}}
\renewcommand{\O}{\mathcal{O}}
\newcommand{\M}{\mathrm{M}}
\newcommand{\N}{\mathrm{N}}
\renewcommand{\P}{\mathscr{P}}
\newcommand{\RP}{\mathscr{RP}_p}
\newcommand{\Kostka}{\mathrm{K}}
\newcommand{\Def}{\mathrm{Def}}
\newcommand{\ch}{\mathrm{ch}}
\newcommand{\m}{\mathbf{m}}
\newcommand{\set}[1]{[#1]}
\newcommand{\B}{\mathcal{B}}
\newcommand{\F}{\mathbb{F}}
\newcommand{\K}{\mathbb{K}}
\title[Brauer constructions and generic Jordan types]{On the Brauer constructions and generic Jordan types of Young modules}
\begin{document}
\author{Yu Jiang}
\address[Y. Jiang]{Division of Mathematical Sciences, Nanyang Technological University, SPMS-MAS-05-34, 21 Nanyang Link, Singapore 637371.}
\email[Y. Jiang]{jian0089@e.ntu.edu.sg}

\author{Kay Jin Lim}
\address[K. J. Lim]{Division of Mathematical Sciences, School of Physical and Mathematical Sciences, Nanyang Technological University, 21 Nanyang Link, Singapore 637371.}
\email[K. J. Lim]{limkj@ntu.edu.sg}

\author{Jialin Wang}
\address[JL. Wang]{Division of Mathematical Sciences, Nanyang Technological University, SPMS-MAS-05-34, 21 Nanyang Link, Singapore 637371.}
\email[JL. Wang]{wangjl@ntu.edu.sg}

\thanks{The second and third authors are supported by the Singapore Ministry of Education AcRF Tier 1 grant RG13/14.}

\begin{abstract} Let $p$ be a prime number. We study the dimensions of the Brauer constructions of Young and Young permutation modules with respect to $p$-subgroups of the symmetric groups. They depend only on partitions labelling the modules and the orbits of the action of the $p$-subgroups, and are related to their generic Jordan types. We obtain some reductive formulae and, in the case of two-part partitions, make some explicit calculation.
\end{abstract}

\maketitle

\section{introduction}
Let $G$ be a finite group and $\F$ be a field of positive characteristic $p$. One of the main tools of studying the $p$-permutation $\F G$-modules via the Brauer construction has been developed by Brou\'{e} in \cite{MBroue}. For a $p$-subgroup $P$ of $G$, there is a bijection between the set of the isomorphism classes of indecomposable $p$-permutation $\F G$-modules with vertex $P$ and the set of isomorphism classes of indecomposable projective $\F[\N_G(P)/P]$-modules defined by the Brauer construction. Furthermore, the Green correspondents of such indecomposable $p$-permutation $\F G$-modules with respect to the subgroup $\N_G(P)$ are precisely the inflation of their corresponding indecomposable projective $\F[\N_G(P)/P]$-modules. Suppose further that $\F$ is algebraically closed. The generic Jordan type of a module for an elementary Abelian $p$-group as defined by Wheeler \cite{WW} is another useful technique to study the $\F G$-modules. For instance, if an indecomposable $\F G$-module $M$ has non-generically free Jordan type upon restriction to an elementary Abelian $p$-subgroup $E$ of $G$ then $E$ is contained in a vertex of $M$.

In this paper, we study the classical objects the Young and Young permutation $\F\sym{n}$-modules. Since they are $p$-permutation modules, their stable generic Jordan types (modulo the projectives) restricted to any elementary Abelian $p$-subgroup $E$ of $\sym{n}$ have the form $[1]^r$ for some non-negative integers $r$ depending on $E$. We are interested in the numbers $r$ as in the previous sentence. In Section \ref{S: orbit numbers}, one of our main results shows that the dimension of the Brauer construction $Y^\lambda(E)$ is precisely $r$ where $[1]^r$ is the stable generic Jordan type of $Y^\lambda{\downarrow_E}$ and $\dim_\F Y^\lambda(P)$, for any $p$-subgroup $P$ of $\sym{n}$, depend only on the orbit type of $P$ on the set $\{1,\ldots,n\}$. As such, we call $\dim_\F Y^\lambda(P)$ the orbit numbers. For example, when $n=4$, $P=\langle(1,2,3,4)\rangle$ and $Q=\langle (1,2)(3,4),(1,3)(2,4)\rangle$, for any partition $\lambda$ of $4$, we have $\dim_\F Y^\lambda(P)=\dim_\F Y^\lambda(Q)=r$ where $Y^\lambda{\downarrow_E}$ has stable generic Jordan type $[1]^r$. The orbit numbers are interesting in the sense that, when $P$ is a vertex of $Y^\lambda$, following \cite[Theorem 2]{KErdmann}, we have $\dim_\F Y^\lambda(P)$ is the product of the dimensions of the projective modules $Y^{\lambda(0)},\ldots,Y^{\lambda(s)}$ where $\lambda=\lambda(0)+p\lambda(1)+\cdots+p^s\lambda(s)$ is the $p$-adic expansion of $\lambda$. It is an open problem to find a closed-form for the dimensions of the indecomposable projective modules for the symmetric groups. In Section \ref{S: Some computation}, we obtain some reductive formulae about orbit numbers. We explicitly calculate the orbit numbers in the case when $\lambda$ are two-part partitions in Section \ref{S: two part}.

\section{preliminaries}
Throughout the paper $\F$ is an algebraically closed field of positive characteristic $p$. For any finite group $G$, an $\F G$-module is assumed to be a finitely generated left $\F G$-module.

\subsection{Representation theory of finite groups}
For a general background about the modular representation theory of finite groups, we refer readers to \cite{JAlperin} or \cite{HNYT}.

Let $G$ be a finite group and let $M,N$ be two $\F G$-modules. We write $N\mid M$ if $N$ is isomorphic to a direct summand of $M$, i.e., $M\cong N\oplus L$ for some $\F G$-module $L$. Suppose further that $N$ is indecomposable. The number of summands in an indecomposable direct sum decomposition of $M$ that are isomorphic to $N$ is well-defined by Krull-Schmidt Theorem (see \cite[Section 4, Theorem 3]{JAlperin}) and is denoted by $[M: N]$.

Let $M$ be an indecomposable $\F G$-module and $H$ be a subgroup of $G$. Then $M$ is said to be relatively $H$-projective if there exists some $\F H$-module $N$ such that $M\mid N{\uparrow^{G}}$, here $N{\uparrow^G}$ denotes the induction of $N$ to $G$. By \cite{JGreen}, the minimal (with respect to inclusion of subgroups) subgroups $H$ of $G$ subject to the condition such that $M\mid N{\uparrow^G}$ for some $\F H$-module $N$ are $p$-subgroups and unique up to $G$-conjugation. These $p$-subgroups of $G$ are called the vertices of $M$. Let $P$ be a vertex of $M$. We denote the normalizer of $P$ in $G$ by $\N_G(P)$. Then there exists, unique up to isomorphism and $\N_G(P)$-conjugation, an indecomposable $\F P$-module $S$ such that $M\mid S{\uparrow^{G}}$. Such an $\F P$-module is called a source of $M$.

Let $M$ be an indecomposable $\F G$-module, let $P$ be a vertex of $M$ and let $H$ be a subgroup of $G$ containing $\N_G(P)$. The Green correspondent of $M$ with respect to the subgroup $H$ is the unique indecomposable summand $N$ of $M{\downarrow_H}$ such that $N$ has a vertex $P$.

Let $E=\langle g_1,\ldots,g_k\rangle$ be an elementary Abelian $p$-group of order $p^k$ and $M$ be an $\F E$-module. Let $\K$ be a field extension of $\F$ containing the indeterminates $\alpha_{1}, \alpha_{2},\ldots, \alpha_{k}$. Consider the element
\[u_{\alpha}:=1+\sum_{i=1}^k\alpha_{i}(g_{i}-1)\in \K E.\] Since $\langle u_\alpha\rangle$ is a cyclic group of order $p$, the restriction of $\K\otimes_{\F}M$ to the shifted subgroup $\K\langle u_\alpha\rangle$ is isomorphic to a direct sum of $n_i$ unipotent Jordan blocks of sizes $i$ where $i=1,2,\ldots, p$. The generic Jordan type of the $\F E$-module $M$ is defined as $[1]^{n_{1}}[2]^{n_{2}}\cdots [p]^{n_{p}}$. By \cite{WW}, the generic Jordan type is independent of the choice of the generators of $E$. The stable generic Jordan type of $M$ is $[1]^{n_{1}}[2]^{n_{2}}\cdots [p-1]^{n_{p-1}}$. The module $M$ is called generically free if $n_i=0$ for all $i=1,2,\ldots, p-1$. The following are the properties we shall need and we refer readers to \cite{EFJPAS,GLW,WW} for more details.

\begin{lem}\label{L: generic Jordan type}\
\begin{enumerate}[(i)]
 \item The generic Jordan type of a direct sum of modules is the direct sum of the generic Jordan types of the modules.
 \item Let $E'$ be a proper subgroup of an elementary Abelian $p$-group $E$ and let $M$ be an $\F E'$-module. Then the module $M{\uparrow^{E}}$ is generically free.
\end{enumerate}
\end{lem}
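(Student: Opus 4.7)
Part (i) is immediate from the definition. If $M=N_{1}\oplus N_{2}$ as $\F E$-modules, then $u_{\alpha}$ acts block-diagonally on $\K\otimes_{\F}M=(\K\otimes_{\F}N_{1})\oplus(\K\otimes_{\F}N_{2})$, so its multiset of Jordan block sizes (and hence the generic Jordan type) is the disjoint union of those of $u_{\alpha}$ on each summand.

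For part (ii), my plan is to reduce to a direct computation with truncated polynomial algebras. Since $E$ is elementary Abelian it is an $\F_{p}$-vector space, so the proper subgroup $E'$ admits a nonzero direct complement $C$ in $E$; the generic Jordan type being independent of the choice of generators of $E$, I would fix generators $g_{1},\ldots,g_{j}$ of $C$ (with $j\geq 1$) and $g_{j+1},\ldots,g_{k}$ of $E'$. The factorization $\F E=\F C\otimes_{\F}\F E'$ then yields an $\F E$-module identification
\[ M{\uparrow^{E}}\cong \F C\otimes_{\F}M, \]
in which $C$ acts on $\F C$ by left multiplication and $E'$ acts on $M$ as given. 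Setting $A=\sum_{i\le j}\alpha_{i}(g_{i}-1)\in \K C$ and $B=\sum_{i>j}\alpha_{i}(g_{i}-1)\in \K E'$, the element $u_{\alpha}-1=A+B$ acts on $\K\otimes_{\F}M{\uparrow^{E}}=\K C\otimes_{\K}\K M$ with $A$ operating only on the $\K C$ factor and $B$ only on the $\K M$ factor.

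The crux is to show that $\K C\otimes_{\K}\K M$ is free as a module over $\K\langle u_{\alpha}\rangle=\K[A+B]/((A+B)^{p})$, for this forces every generic Jordan block to have size $p$ and hence $M{\uparrow^{E}}$ to be generically free. By Frobenius in characteristic $p$ and the commutativity of $\K E$, we have $A^{p}=B^{p}=(A+B)^{p}=0$ in their ambient algebras, and extending $A$ to a $\K$-basis of $\mathrm{span}_{\K}\{g_{i}-1:i\leq j\}$ identifies $\K C$ with a truncated polynomial algebra of the form $\K[A]/(A^{p})\otimes_{\K}R'$. The same change-of-basis trick on the two-dimensional $\K$-span of $\{A,B\}$ produces the key algebra isomorphism
\[ \K[A]/(A^{p})\otimes_{\K}\K[B]/(B^{p})\;\cong\;\K[A+B]/((A+B)^{p})\otimes_{\K}\K[B]/(B^{p}). \]
Viewing $\K M$ as a $\K[B]/(B^{p})$-module through the $\K E'$-action of $B$ and combining these identifications via the relative tensor product should yield
\[ \K C\otimes_{\K}\K M\;\cong\;\K[A+B]/((A+B)^{p})\otimes_{\K}(R'\otimes_{\K}\K M) \]
as $\K\langle u_{\alpha}\rangle$-modules, with $A+B$ acting by left multiplication on the leftmost factor; this is visibly free.

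The main obstacle I anticipate is the bookkeeping: keeping the tensor-product identifications compatible with the correct action of $B$ (coming from the $\F E'$-module structure on $M$, not from left multiplication on a group algebra), and verifying that the change of basis from $(A,B)$ to $(A+B,B)$ in $\K[A]/(A^{p})\otimes_{\K}\K[B]/(B^{p})$ interacts correctly with the relative tensor product over $\K[B]/(B^{p})$ that produces $\K C\otimes_{\K}\K M$. Once this bookkeeping is in place, generic freeness is read off directly from the last display.
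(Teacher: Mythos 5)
The paper does not actually prove this lemma; it is stated as a known fact with a pointer to \cite{EFJPAS,GLW,WW}, so there is no in-paper argument to compare against. Your proof is correct, and it is essentially the standard argument one finds in those sources. Part (i) is indeed immediate from block-diagonality. For part (ii), the bookkeeping you flag as the main obstacle does go through cleanly: with $S=\K[A,B]/(A^{p},B^{p})$ acting on $\K C\otimes_{\K}\K M$ ($A$ by multiplication on the first factor, $B$ via the $\K E'$-structure on $\K M$), the Frobenius identity $(A+B)^{p}=A^{p}+B^{p}$ shows the ideal $(A^{p},B^{p})$ equals $((A+B)^{p},B^{p})$, so $S\cong\K[u]/(u^{p})\otimes_{\K}\K[v]/(v^{p})$ with $u=A+B$, $v=B$; since $\K C\otimes_{\K}\K M\cong S\otimes_{\K[v]/(v^{p})}\bigl(R'\otimes_{\K}\K M\bigr)$ is induced from the subalgebra $\K[v]/(v^{p})$, it is isomorphic to $\K[u]/(u^{p})\otimes_{\K}\bigl(R'\otimes_{\K}\K M\bigr)$ with $u$ multiplying the first factor, hence free over $\K\langle u_{\alpha}\rangle$, and all Jordan blocks of $u_{\alpha}$ have size $p$. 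The two inputs you rely on --- that $C\neq 1$ because $E'$ is proper, and that the generic Jordan type does not depend on the chosen generators of $E$ --- are exactly the ones available (the latter is the cited result of Wheeler), so the argument is complete.
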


\subsection{Brauer construction}
One of main techniques that we shall need is the Brauer constructions of $p$-permutation modules introduced by Brou\'{e} in \cite{MBroue}. An $\F G$-module is called a $p$-permutation module if for any $p$-subgroup $P$ of $G$ there exists a basis $\B$ that is permuted by $P$, i.e., for each $g\in P$ and $b\in \B$, we have $gb\in\B$. By \cite[(0.4)]{MBroue}, an indecomposable $p$-permutation module is precisely a module with trivial source. The class of all $p$-permutation $\F G$-modules is closed under taking finite direct sum, direct summand and tensor product.

We recall the Brauer construction of a module. Let $M$ be an $\F G$-module and $P$ be a $p$-subgroup of $G$. The set of $P$-fixed points in $M$ is \[M^{P}:=\{m\in M:\text{$gm=m$ for all $g\in P$}\}.\] Notice that $M^{P}$ is an $\F\N_{G}(P)$-module on which $P$ acts trivially. Let $Q$ be a proper subgroup of $P$. The relative trace map $\Tr_{Q}^{P}$: $M^{Q}\rightarrow M^{P}$ is the linear map defined by \[\Tr_{Q}^{P}(m):=\sum_{g\in P/Q}gm,\] where $P/Q$ denotes a set of left coset representatives of $Q$ in $P$ and $m\in M^{Q}$. Observe that $\Tr_{Q}^{P}(v)$ is independent of the choice of the set of left coset representatives. Furthermore \[\Tr^{P}(M):=\sum \Tr_{Q}^{P}(M^Q),\] where $Q$ runs over the set of all proper subgroups of $P$, is an $\F \N_{G}(P)$-submodule of $M^{P}$. One defines the Brauer construction of $M$ with respect to $P$ to be the $\F [\N_{G}(P)/P]$-module \[M(P):=M^{P}/\Tr^{P}(M).\] In general, if $M$ is indecomposable and $M(P)\neq 0$ then $P$ is contained in a vertex of $M$. The converse is true in the case of $p$-permutation modules.

\begin{thm}[{\cite[Theorem 3.2 (1)]{MBroue}}]\label{Contain}
Let $M$ be an indecomposable $p$-permutation $\F G$-module, let $Q$ be a vertex of $M$ and let $P$ be a $p$-subgroup of $G$. Then $M(P)\neq 0$ if and only if $P$ is contained in a $G$-conjugate of $Q$.
\end{thm}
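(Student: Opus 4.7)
The plan is to use the standard description of the Brauer construction on a $p$-permutation module via an invariant basis. Since $M$ is $p$-permutation, I will fix a $P$-invariant $\F$-basis $\B$ of $M$. A short calculation then shows that $M^{P}$ admits an $\F$-basis consisting of the $P$-orbit sums of elements of $\B$, while $\Tr^{P}(M)$ is spanned by those $P$-orbit sums coming from orbits of size strictly greater than one. Consequently $M(P)$ carries an $\F$-basis in natural bijection with the set $\B^{P}$ of $P$-fixed basis elements.

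For the forward implication, assume $M(P)\ne 0$. Since $M$ is an indecomposable trivial-source module with vertex $Q$, $M$ is a direct summand of the permutation module $\F(G/Q)$, and the Brauer construction respects direct summands, so $M(P)$ is a summand of $\F(G/Q)(P)$. Applied to the natural coset basis of $\F(G/Q)$, the basis description above identifies $\F(G/Q)(P)$ with the $\F$-space on the set $(G/Q)^{P}=\{xQ\colon x^{-1}Px\le Q\}$; non-vanishing of $M(P)$ therefore forces this set to be non-empty, which is exactly the statement that $P$ is contained in a $G$-conjugate of $Q$.

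For the reverse direction, by conjugating I may assume $P\le Q$, and I first reduce to the claim $M(Q)\ne 0$: a $Q$-invariant basis of $M$ is automatically $P$-invariant with $\B^{Q}\subseteq \B^{P}$, so $\dim_{\F}M(P)\ge \dim_{\F}M(Q)$. To establish $M(Q)\ne 0$, I plan to combine two ingredients. The first is the tensor formula $(X\otimes Y)(P)\cong X(P)\otimes Y(P)$ for $p$-permutation modules, which is immediate from the invariant-basis description applied to the tensor product basis; specialised to $\mathrm{End}_{\F}(M)\cong M\otimes M^{*}$ it yields $\mathrm{End}_{\F}(M)(Q)\cong M(Q)\otimes M(Q)^{*}$. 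The second is Higman's criterion combined with the minimality of the vertex: since $Q$ is a vertex of $M$, there exists $\varphi\in\mathrm{End}_{\F Q}(M)$ with $\Tr_{Q}^{G}(\varphi)=\mathrm{id}_{M}$, and $\varphi$ cannot lie in $\sum_{R\lneq Q}\Tr_{R}^{Q}(\mathrm{End}_{\F R}(M))$---otherwise transitivity of the relative trace would write $\mathrm{id}_{M}$ as a sum of traces from proper subgroups of $Q$, forcing $M$ to be relatively projective with respect to a proper subgroup of $Q$ and contradicting the minimality of $Q$. Hence $\varphi$ defines a non-zero class in $\mathrm{End}_{\F}(M)(Q)$, so $M(Q)\ne 0$.

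The main obstacle will be the reverse direction, specifically the non-vanishing $M(Q)\ne 0$ for the vertex $Q$: this is where the Higman trace condition and the tensor formula must be combined to convert the abstract minimality property defining a vertex into an explicit non-vanishing of the Brauer quotient. By contrast, the forward implication and the descent from $M(Q)\ne 0$ to $M(P)\ne 0$ amount to bookkeeping with $P$-invariant bases.
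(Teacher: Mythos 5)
The paper does not prove this statement: it is quoted verbatim from Brou\'e as \cite[Theorem 3.2(1)]{MBroue}, so there is no in-paper proof to compare against. What you have written is a from-scratch reconstruction, and in outline it is Brou\'e's argument. The combinatorial description of $M(P)$ via a $P$-invariant basis is correct; the forward implication via $M\mid\F(G/Q)$ and the identification of $\F(G/Q)(P)$ with the fixed cosets is correct; and the monotonicity $\dim_\F M(P)\ge\dim_\F M(Q)$ for $P\le Q$ via a $Q$-invariant basis is correct. The tensor formula $(X\otimes Y)(P)\cong X(P)\otimes Y(P)$ for $p$-permutation modules is also fine, and identifying $\mathrm{End}_\F(M)^Q$ with $\mathrm{End}_{\F Q}(M)$ and $\Tr^Q(\mathrm{End}_\F(M))$ with $\sum_{R\lneq Q}\Tr_R^Q(\mathrm{End}_{\F R}(M))$ is exactly the right setup for Higman's criterion.

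The one place you pass too quickly is the final contradiction: from $\varphi\in\sum_{R\lneq Q}\Tr_R^Q(\mathrm{End}_{\F R}(M))$, applying $\Tr_Q^G$ and transitivity gives $\mathrm{id}_M\in\sum_{R\lneq Q}\Tr_R^G(\mathrm{End}_{\F R}(M))$, and you then assert this ``forces $M$ to be relatively projective with respect to a proper subgroup of $Q$.'' That inference is not automatic from having $\mathrm{id}_M$ in a \emph{sum} of the trace ideals; you need to isolate a single $R$. The standard way to close this is to use the indecomposability of $M$: either invoke Rosenberg's lemma in the local ring $\mathrm{End}_{\F G}(M)$ to conclude $\mathrm{id}_M\in\Tr_R^G(\mathrm{End}_{\F R}(M))$ for some individual $R\lneq Q$, or observe that $\mathrm{id}_M\in\sum_R\Tr_R^G(\mathrm{End}_{\F R}(M))$ exhibits $M$ as a summand of $\bigoplus_R M{\downarrow_R}{\uparrow^G}$ and apply Krull--Schmidt to land in a single summand $M{\downarrow_R}{\uparrow^G}$. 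Adding one of these two sentences makes the proposal complete.
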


Suppose further that $M$ is a $p$-permutation $\F G$-module. Let $\B$ be a basis of $M$ permuted by $P$ and let
\begin{align*}
\B(P):=\{b\in \B:\text{$gb=b$ for all $g\in P$}\}.
\end{align*} Notice that $P$ acts trivially on $\B(P)$. As a corollary of Theorem \ref{Contain}, we have the following.

\begin{cor}\label{C: EG}
Let $M$ be a $p$-permutation $\F G$-module and $\B$ be a $p$-permutation basis of $M$ with respect to a $p$-subgroup $P$ of $G$. Then $M(P)$ is isomorphic to the $\F$-span of $\B(P)$ as $\F [\N_{G}(P)/P]$-modules. Furthermore, if $M\cong U\oplus V$ then \[M(P)\cong U(P)\oplus V(P).\]
\end{cor}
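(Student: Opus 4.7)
The plan is to reduce the computation of $M(P)$ to transitive permutation modules via the $P$-orbit decomposition of $\B$, and then handle the two orbit types (singleton versus non-singleton) separately. First I would split $\B = \bigsqcup_{\mathcal{O}} \mathcal{O}$ into $P$-orbits, giving an $\F P$-module decomposition $M = \bigoplus_{\mathcal{O}} \F\mathcal{O}$. Since both $(-)^P$ and the partial traces $\Tr_Q^P$ commute with finite direct sums of $\F P$-modules, this yields $M(P) = \bigoplus_{\mathcal{O}} (\F\mathcal{O})(P)$, reducing the problem to computing each $(\F\mathcal{O})(P)$.

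For a singleton orbit $\mathcal{O} = \{b\}$ with $b \in \B(P)$, the module $\F\mathcal{O}$ is the trivial $\F P$-module, and for every proper subgroup $Q < P$ the trace $\Tr_Q^P$ acts as multiplication by the index $[P:Q]$, which is divisible by $p$ and hence zero in $\F$; so $(\F\mathcal{O})(P) = \F b$. For a non-singleton orbit $\mathcal{O}$ and any $b \in \mathcal{O}$, the fixed space $(\F\mathcal{O})^P$ is one-dimensional and spanned by the orbit sum $\sum_{b'\in\mathcal{O}} b' = \Tr_{\stab_P(b)}^P(b)$; since $\stab_P(b)$ is a proper subgroup of $P$, this orbit sum lies in $\Tr^P(\F\mathcal{O})$, forcing $(\F\mathcal{O})(P) = 0$. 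Assembling the pieces yields an $\F$-linear identification of $M(P)$ with the span of $\B(P)$. To upgrade this to an $\F[\N_G(P)/P]$-module isomorphism, I would observe that $\N_G(P)$ permutes $\B(P)$ (if $b$ is $P$-fixed and $g \in \N_G(P)$, then $gb$ is $gPg^{-1} = P$-fixed) and that the identification is realised by the $\N_G(P)$-equivariant composition $\F\B(P) \hookrightarrow M^P \twoheadrightarrow M(P)$.

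For the direct sum statement, given $M \cong U \oplus V$, I would verify that the decomposition passes to fixed points, $M^P = U^P \oplus V^P$, and to traces, $\Tr^P(M) = \Tr^P(U) \oplus \Tr^P(V)$; the latter holds because every $m \in M^Q$ splits as $u+v$ with $u \in U^Q$, $v \in V^Q$, and $\Tr_Q^P(m) = \Tr_Q^P(u) + \Tr_Q^P(v)$. Quotienting then gives $M(P) \cong U(P) \oplus V(P)$. There is no serious obstacle here; the only point requiring genuine care is identifying $\Tr^P(\F\mathcal{O})$ with the line spanned by a non-singleton orbit sum, which is exactly where the formula $\sum_{b'\in\mathcal{O}} b' = \Tr_{\stab_P(b)}^P(b)$ does the work.
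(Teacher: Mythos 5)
Your argument is correct and complete, and it is worth noting that the paper does not actually prove this statement: it presents the corollary as a consequence of Brou\'e's Theorem \ref{Contain}, i.e.\ it is essentially quoting Brou\'e's basic lemma on $p$-permutation bases rather than deriving it. Your orbit-by-orbit computation is exactly the standard proof of that lemma, and all the right ingredients are there: the decomposition $M=\bigoplus_{\mathcal{O}}\F\mathcal{O}$ into transitive $\F P$-permutation summands, the vanishing of $\Tr_{Q}^{P}$ on a trivial summand because $p$ divides $[P:Q]$, the identity $\sum_{b'\in\mathcal{O}}b'=\Tr_{\stab_P(b)}^{P}(b)$ with $\stab_P(b)$ proper, which kills every non-singleton orbit, and the additivity of $(-)^{P}$ and $\Tr^{P}$ for the second assertion. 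What your self-contained route buys is independence from the machinery of vertices, since Theorem \ref{Contain} is logically downstream of this computation rather than upstream of it. The one point to flag is the equivariance step: your parenthetical shows that $gb$ is $P$-fixed for $g\in\N_{G}(P)$, but not that $gb\in\B$, so the claim that $\N_{G}(P)$ permutes $\B(P)$ --- and hence that $\F\B(P)$ is an $\F[\N_{G}(P)/P]$-submodule of $M^{P}$ --- needs $\B$ to be stable under $\N_{G}(P)$ and not merely under $P$. This hypothesis is implicit in the statement itself (otherwise ``the $\F$-span of $\B(P)$'' carries no $\N_{G}(P)/P$-action to begin with), and it holds in every application in the paper, where $\B$ is a set of tabloids permuted by all of $\sym{n}$; alternatively, one can state your conclusion as the assertion that the $\N_{G}(P)$-equivariant surjection $M^{P}\twoheadrightarrow M(P)$ restricts to a linear isomorphism on the span of $\B(P)$, which is all that is used later.
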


We end this subsection with the following well-known result of Brou\'{e}.

\begin{thm}[{\cite[Theorems 3.2 and 3.4]{MBroue}}]\label{BC1} Let $G$ be a finite group and let $P$ be a $p$-subgroup of $G$.
\begin{enumerate}
\item [(i)] The Brauer construction sending $M$ to $M(P)$ is a bijection between the isomorphism classes of indecomposable $p$-permutation $\F G$-modules with vertex $P$ and the isomorphism classes of indecomposable projective $\F[\N_G(P)/P]$-modules. Furthermore, the inflation $\Inf_{\N_G(P)/P}^{\N_G(P)}M(P)$ of the $\F[\N_G(P)/P]$-module $M(P)$ to $\N_G(P)$ is the Green correspondent of~$M$ with respect to $\N_G(P)$.
\item [(ii)] Let $N$ be an indecomposable $\F G$-module with a vertex $P$ and $M$ be a $p$-permutation $\F G$-module. Then $N$ is a direct summand of $M$ if and only if $N(P)$ is a direct summand of $M(P)$. Moreover, \[[M:N]=[M(P):N(P)].\]
\end{enumerate}
\end{thm}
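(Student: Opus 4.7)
The plan is to factor the Brauer construction $M \mapsto M(P)$ as the Green correspondence followed by the inverse of an inflation bijection. Because $P \trianglelefteq \N_{G}(P)$, the conceptual heart is the case of a normal $p$-subgroup, which I would dispatch first.

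Assume $P \trianglelefteq G$. An indecomposable $p$-permutation $\F G$-module $M$ with vertex $P$ has trivial source and so is a direct summand of the permutation module $\F[G/P] = \Inf_{G/P}^{G} \F[G/P]$; since the latter is inflated, $P$ acts trivially on every summand, and in particular on $M$. Therefore $M^{P} = M$ and, for every proper $Q < P$, $\Tr_{Q}^{P}(m) = [P:Q]\cdot m = 0$ in characteristic $p$, so $\Tr^{P}(M) = 0$ and $M(P) = M$ with its natural $\F[G/P]$-structure. Conversely every indecomposable projective $\F[G/P]$-module inflates to such an $M$, producing mutually inverse bijections and the identity $\Inf M(P) = M$.

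For general $G$ set $H := \N_{G}(P)$. Given $M$ indecomposable with vertex $P$, let $f(M)$ be its Green correspondent in $\F H$-mod; it is $p$-permutation (as a direct summand of $M{\downarrow_{H}}$) with vertex $P$, so by the normal case $f(M)(P)$ is an indecomposable projective $\F[H/P]$-module with $\Inf f(M)(P) = f(M)$. Writing $M{\downarrow_{H}} = f(M) \oplus M'$, where every indecomposable summand of $M'$ has vertex strictly smaller than $P$ (the defining property of the Green correspondence), Theorem \ref{Contain} forces $M'(P) = 0$, so Corollary \ref{C: EG} yields
\[
M(P) \;=\; (M{\downarrow_{H}})(P) \;=\; f(M)(P) \oplus M'(P) \;=\; f(M)(P).
\]
Thus $M \mapsto M(P)$ equals the composition of the Green correspondence with the inverse of the inflation from the normal case; both are bijections, and the inflation identity gives the Green correspondence statement in (i).

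For part (ii), decompose $M = \bigoplus_{i} M_{i}$ into indecomposable $p$-permutation summands; by Corollary \ref{C: EG}, $M(P) = \bigoplus_{i} M_{i}(P)$. Summands whose vertex contains no $G$-conjugate of $P$ vanish by Theorem \ref{Contain}, and summands with vertex exactly $P$ contribute mutually non-isomorphic indecomposable projectives by part (i), accounting for exactly $[M:N]$ copies of $N(P)$. The main obstacle is showing that a summand $M_{i}$ whose vertex strictly contains $P$ cannot contribute $N(P)$ to $M_{i}(P)$. I would handle this via the transitivity $M_{i}(R) \cong M_{i}(P)(R/P)$ valid for $P \trianglelefteq R$ a $p$-subgroup of $H$: since $N(P)$ is projective over $\F[H/P]$ and so has vanishing Brauer construction at every nontrivial $p$-subgroup of $H/P$, a hypothetical $N(P)$ summand of $M_{i}(P)$ would be orthogonal to every $M_{i}(R)$ with $R/P$ nontrivial, and would be forced, via part (i) applied inside $H/P$ at the image of a vertex of $M_{i}$, to come from a vertex-$P$ summand of $M_{i}{\downarrow_{H}}$; the bijectivity of the Green correspondence would then identify this with $M_{i}$ itself, contradicting the assumption that the vertex of $M_{i}$ strictly contains $P$. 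This rules out the unwanted contributions and yields $[M(P):N(P)] = [M:N]$.
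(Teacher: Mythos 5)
A preliminary remark: the paper offers no proof of this theorem --- it is quoted from Brou\'e with a citation --- so there is no internal argument to compare yours against. On its own terms, your part (i) is the standard proof and is correct in substance: reduce to the normal case, where $M(P)=M$ viewed over $G/P$ and the bijection is inverse to inflation, then compose with the Green correspondence. Two small imprecisions there: the non-correspondent summands of $M{\downarrow_{\N_G(P)}}$ have vertices of the form ${}^{g}P\cap \N_G(P)$ with $g\notin \N_G(P)$, which are not literally ``strictly smaller than $P$''; one still gets $M'(P)=0$ from Theorem \ref{Contain} by an order count. And in the converse direction of the normal case you should note that the inflation of a projective indecomposable has vertex exactly $P$ because its Brauer construction at $P$ is nonzero.

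Part (ii) has a genuine gap, at precisely the step you flag as the main obstacle. You must show that an indecomposable summand $M_i$ whose vertex $Q$ strictly contains a conjugate of $P$ contributes no copy of the projective module $N(P)$ to $M_i(P)$, and your ``orthogonality'' argument does not establish this. The transitivity isomorphism $M_i(R)\cong (M_i(P))(R/P)$ constrains the module $M_i(P)$ as a whole, not its individual direct summands: $(M_i(P))(R/P)\neq 0$ is perfectly compatible with $M_i(P)\cong U\oplus W$ where $U$ is projective (so $U(R/P)=0$) and $W$ carries all the $R/P$-fixed points. Nor does ``part (i) applied inside $H/P$'' help: part (i) describes the Brauer constructions of modules whose vertex \emph{is} $P$; it says nothing about whether a module with strictly larger vertex can acquire a projective direct summand after applying $(-)(P)$. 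The obvious repair via relative projectivity also fails: $M_i(P)$ is a direct summand of $\F_{R/P}{\uparrow^{\N_G(P)/P}}$ with $R/P\neq 1$, but such induced modules \emph{can} have projective direct summands (for $p=2$, $\F_{C_2}{\uparrow^{\sym{3}}}\cong \F\oplus D$ with $D$ the $2$-dimensional projective simple), so no vertex-containment bound on summands of $M_i(P)$ can rule the projective out.

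What is actually needed here is Brou\'e's harder technical input: for $p$-permutation modules the Brauer construction induces a \emph{surjection} $\mathrm{Hom}_{\F G}(N,M_i)\to \mathrm{Hom}_{\F[\N_G(P)/P]}(N(P),M_i(P))$, coming from the identification of $(N^*\otimes M_i)(\Delta P)$ with $N(P)^*\otimes M_i(P)$ together with a relative-trace argument on fixed points. Granting that, a splitting of $N(P)$ off $M_i(P)$ lifts to maps $\alpha\colon N\to M_i$ and $\beta\colon M_i\to N$ whose composite $\beta\alpha$ induces the identity on $N(P)$; since $\mathrm{End}_{\F G}(N)$ is local, $\beta\alpha$ is invertible (a nilpotent endomorphism cannot induce $\mathrm{id}_{N(P)}\neq 0$), so $N\mid M_i$ and $N\cong M_i$, contradicting the assumption on vertices. (Your final appeal to ``the bijectivity of the Green correspondence'' to pass from $f(N)\mid M_i{\downarrow_{\N_G(P)}}$ to $N\cong M_i$ also quietly uses the Burry--Carlson--Puig theorem rather than the Green correspondence itself.) As written, your argument only yields the inequality $[M(P):N(P)]\geq [M:N]$ and the ``only if'' direction of the summand statement; the reverse inequality is not proved.
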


\subsection{Composition, partition and orbit}\label{SS: composition} Let $\NN$ be the set of nonnegative integers and let $n\in\NN$. By a composition $\alpha$ of $n$, we mean a sequence of nonnegative integers $(\alpha_1,\ldots,\alpha_r)$ such that $\sum^r_{i=1}\alpha_i=n$. In this case, we write $|\alpha|=n$. By convention, the unique composition of $0$ is denoted as $\varnothing$. The composition $\alpha$ is called a partition if $\alpha_1\geq\cdots\geq\alpha_r$. We write $\C(n)$ and $\P(n)$ for the set of compositions and partitions of $n$ respectively. The set $\P(n)$ is partially ordered by the dominance order $\unrhd$ and totally ordered by the lexicographic order. Notice that the lexicographic order refines the dominance order.


Let $\alpha=(\alpha_1,\ldots,\alpha_r)$ and $\beta=(\beta_1,\ldots,\beta_s)$ be two compositions and let $m$ be a positive integer. We write \begin{align*}
\alpha+\beta=\beta+\alpha&=(\alpha_1+\beta_1,\ldots,\alpha_r+\beta_r,\beta_{r+1},\ldots,\beta_s),\\
\alpha\bullet \beta&=(\alpha_1,\ldots,\alpha_r,\beta_1,\ldots,\beta_s),\\
m\alpha&=(m\alpha_1,\ldots,m\alpha_r),
\end{align*} if $r\leq s$. A composition $\delta$ is a refinement of $\beta$ if there exist compositions $\delta^{(1)},\ldots,\delta^{(s)}$ such that $\delta=\delta^{(1)}\bullet\cdots\bullet\delta^{(s)}$ and, for $i=1,2,\ldots, s$, we have $|\delta^{(i)}|=\beta_i$.

A partition $\lambda=(\lambda_1,\ldots,\lambda_r)$ is called $p$-restricted if $\lambda_r<p$ and $\lambda_i-\lambda_{i+1}<p$ for all $i=1,2,\ldots,r-1$. We write $\RP(n)$ for the set of all $p$-restricted partitions of $n$. The $p$-adic expansion of a partition $\lambda$ is the sum \[\lambda=\sum^t_{i=0}p^{i}\lambda(i)\] for some nonnegative integer $t$ such that, for each $i=0,1,\ldots,t$, $\lambda(i)$ is a $p$-restricted partition. By the proof of \cite[Lemma 7.5]{GJR}, there is a way to write down the $p$-adic expansion of $\lambda$ as follows. Let $\lambda_{r+1}=0$. Suppose that, for each $j=1,2,\ldots, r$, we have the $p$-adic sum of the number \[\lambda_j-\lambda_{j+1}=\sum_{i= 0}^{t}a_{i,j}p^i,\] i.e., $0\leq a_{i,j}\leq p-1$. Then, for each $i=0,1,\ldots,t$, $\lambda(i)$ is the desired $p$-restricted partition where $\lambda(i)_k=\sum^r_{j=k}a_{i,j}$.

For any partition $\lambda=(\lambda_1,\ldots,\lambda_r)$, we denote by $[\lambda]$ the set $\{(i,j)\in \NN^{2}:\ 1\leq i\leq r,\  1\leq j\leq \lambda_{i}\}$. It is called the Young diagram of $\lambda$. The $p$-core of $\lambda$ is the partition whose Young diagram is obtained by removing all possible rim $p$-hooks from $[\lambda]$ and is denoted by $\kappa_{p}(\lambda)$. The number of rim $p$-hooks removed from $[\lambda]$ to get $\kappa_{p}(\lambda)$ is called the $p$-weight of $\lambda$.

Let $A$ be a finite set. The permutation group on the set $A$ is denoted as $\sym{A}$. Let $n\in\NN$. We denote the set $\{1,\ldots,n\}$ by $\set{n}$ and let $\sym{n}=\sym{[n]}$. By convention, $\set{0}=\emptyset$ and $\sym{0}$ is the trivial group. Let $\lambda=(\lambda_1,\ldots,\lambda_r)$ be a composition. The Young subgroup $\sym{\lambda}$ is identified with the direct product \[\sym{\lambda_1}\times\cdots\times\sym{\lambda_r},\] where the first factor $\sym{\lambda_1}$ acts on the set $\{1,\ldots,\lambda_1\}$, the second factor $\sym{\lambda_2}$ acts on the set $\{1+\lambda_1,\ldots,\lambda_1+\lambda_2\}$ and so on.

We now discuss the orbits of $p$-subgroups of $\sym{n}$ on the set $\set{n}$. Let \[\P^{(p)}(n)=\{\O\in \C(n): \text{$\O=(1^{a_{0}}, p^{a_{1}},\ldots, (p^{r})^{a_{r}})$ for some $r$}\},\] where, in $\O$, the first $a_0$ entries of $\O$ are $1$, the next $a_1$ entries are $p$ and so on. For each $\O=(1^{a_{0}}, p^{a_{1}},\ldots, (p^{r})^{a_{r}})\in\P^{(p)}(n)$, let \[p^s\O=((p^s)^{a_0},(p^{s+1})^{a_1},\ldots,(p^{s+r})^{a_r})\in\P^{(p)}(p^sn).\] Let $P$ be a $p$-subgroup of $\sym{n}$. We denote the set of orbits of the action of $P$ on $\set{n}$ by $\orbit{\set{n}}{P}$. We say that $\orbit{\set{n}}{P}$ has type $\O=(1^{a_0},(p)^{a_1},\ldots,(p^r)^{a_r})\in\P^{(p)}(n)$ for some $r$ if, for each $i=0,1,\ldots,r$, the number of orbits with sizes $p^i$ in $\orbit{\set{n}}{P}$ is exactly $a_i$. We write $\orbit{\set{n}}{P}\simeq \orbit{\set{n}}{Q}$ if $Q$ is another $p$-subgroup of $\sym{n}$ such that both $\orbit{\set{n}}{P}$ and $\orbit{\set{n}}{Q}$ have the same type, i.e., there is a permutation $\sigma\in\sym{n}$ such that $\sigma A\in \orbit{\set{n}}{Q}$ for all $A\in \orbit{\set{n}}{P}$. It is clear that $\orbit{\set{n}}{P}\simeq \orbit{\set{n}}{Q}$ if $P$ is conjugate to $Q$ in $\sym{n}$.

Let $\lambda$ be a partition of $n$, let $\sum_{i=0}^{r}p^{i}\lambda(i)$ be the $p$-adic expansion of $\lambda$ and let \[\O_\lambda:=(1^{|\lambda(0)|}, p^{|\lambda(1)|},\ldots, (p^{r})^{|\lambda(r)|})\in\P^{(p)}(n).\] We fix a Sylow $p$-subgroup of $\sym{\O_\lambda}$ and denote it by $P_\lambda$. Notice that, since any Sylow $p$-subgroup of $\sym{p^i}$ acts transitively on the set $\set{p^i}$, we have that $\orbit{\set{n}}{P_\lambda}$ has type $\O_\lambda$. We end this subsection by the following lemma.

\begin{lem}\label{L: minimal subgroup}\
Let $\lambda\in\P(n)$, $\O\in\P^{(p)}(n)$ and $P$ be a $p$-subgroup of $\sym{n}$ such that $\orbit{\set{n}}{P}$ has type $\O$. Then $\O$ is a rearrangement of a refinement of $\O_\lambda$ if and only if $P$ is conjugate to a $p$-subgroup of $P_\lambda$.
\end{lem}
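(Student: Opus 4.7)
The plan is to exploit the correspondence between $P$-invariant partitions of $\set{n}$ with prescribed block sizes and containments of $P$ in Young subgroups of $\sym{n}$, together with the observation that a Sylow $p$-subgroup of $\sym{p^i}$ acts transitively on $\set{p^i}$.

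For the $(\Leftarrow)$ direction, I would conjugate and assume without loss of generality that $P \leq P_\lambda$. Write $\sym{\O_\lambda}$ as a direct product of factors $\sym{p^i}$, one for each entry $p^i$ of $\O_\lambda$. Then $P_\lambda$ is the product of Sylow $p$-subgroups of these factors, each of which is transitive on the $p^i$ letters it permutes, so $\orbit{\set{n}}{P_\lambda}$ has type $\O_\lambda$ with the $k$-th $P_\lambda$-orbit corresponding to the $k$-th entry of $\O_\lambda$. Each such $P_\lambda$-orbit of size $p^i$ is a disjoint union of $P$-orbits, and these $P$-orbit sizes are powers of $p$ summing to $p^i$, giving a composition $\delta^{(k)}$ with $|\delta^{(k)}|$ equal to the $k$-th entry of $\O_\lambda$. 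The concatenation $\delta=\delta^{(1)}\bullet\cdots\bullet\delta^{(s)}$ is then a refinement of $\O_\lambda$, and the multiset of its parts is exactly the multiset of $P$-orbit sizes, so $\O$ is a rearrangement of $\delta$.

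For the $(\Rightarrow)$ direction, suppose $\O$ is a rearrangement of a refinement $\delta=\delta^{(1)}\bullet\cdots\bullet\delta^{(s)}$ of $\O_\lambda$. The multiset of parts of $\delta$ agrees with that of $\O$, which in turn records the sizes of $P$-orbits on $\set{n}$. Hence for each $k=1,\ldots,s$ I can select a collection of distinct $P$-orbits whose sizes match the parts of $\delta^{(k)}$, with the selections across different $k$ disjoint and exhausting all $P$-orbits. Letting $\mathcal{Q}_k$ be the union of the $P$-orbits assigned to $\delta^{(k)}$ produces a $P$-stable partition $\mathcal{Q}=\{\mathcal{Q}_1,\ldots,\mathcal{Q}_s\}$ of $\set{n}$ whose block sizes coincide with the entries of $\O_\lambda$. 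Relabelling $\set{n}$ (i.e., conjugating in $\sym{n}$) to identify $\mathcal{Q}$ with the standard Young partition puts $P$ inside $\sym{\O_\lambda}$. Since $P$ is a $p$-subgroup of $\sym{\O_\lambda}$, Sylow's theorem embeds it in some Sylow $p$-subgroup, and as all such subgroups are $\sym{\O_\lambda}$-conjugate to $P_\lambda$, we conclude that $P$ is conjugate in $\sym{n}$ to a subgroup of $P_\lambda$.

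The main delicate point is the matching step in $(\Rightarrow)$: one must verify that equality of multisets of parts is enough to pair off $P$-orbits with the parts of the various $\delta^{(k)}$, since each part of $\O$ literally equals the size of some $P$-orbit. The remaining work is essentially bookkeeping, as the transitivity of a Sylow $p$-subgroup of $\sym{p^i}$ on $\set{p^i}$ already controls how $P_\lambda$-orbits break into $P$-orbits in the $(\Leftarrow)$ direction.
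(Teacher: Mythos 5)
Your proof is correct and follows essentially the same route as the paper: in both directions you conjugate so that $P$-orbits nest inside $P_\lambda$-orbits, use the transitivity of a Sylow $p$-subgroup of $\sym{p^i}$ to identify the $P_\lambda$-orbits with the parts of $\O_\lambda$, and in the forward direction conclude $P\leq\sym{\O_\lambda}$ and invoke Sylow to land in $P_\lambda$. The only difference is cosmetic: you spell out the orbit-to-refinement matching explicitly with the blocks $\mathcal{Q}_k$, whereas the paper compresses this into a ``without loss of generality'' step.
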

\begin{proof} Suppose that $\O$ is a rearrangement of a refinement of $\O_\lambda$. Without loss of generality, since $\orbit{\set{n}}{P_\lambda}$ has type $\O_\lambda$, we may assume that each orbit in $\orbit{\set{n}}{P_\lambda}$ is a union of some orbits in $\orbit{\set{n}}{P}$. Let $\sigma\in P$. Then $\sigma$ leaves each orbit $A$ in $\orbit{\set{n}}{P}$ invariant, i.e., $\sigma(A)=A$. Therefore, $\sigma$ leaves each orbit in $\orbit{\set{n}}{P_\lambda}$ invariant. This shows that $\sigma\in\sym{\O_\lambda}$ and hence $P\subseteq \sym{\O_\lambda}$. We conclude that $P$ is conjugate to a subgroup of $P_\lambda$. Conversely, suppose, without loss of generality, that $P$ is a subgroup of $P_\lambda$. Then each orbit in $\orbit{\set{n}}{P_\lambda}$ is a union of some orbits in $\orbit{\set{n}}{P}$. By definition, the type $\O$ of $\orbit{\set{n}}{P}$ is a rearrangement of a refinement of the type $\O_\lambda$ of $\orbit{\set{n}}{P_\lambda}$. 
\end{proof}

\subsection{Representation theory of symmetric groups}\label{SS: sym}  We now turn to the representation theory of symmetric groups. For a general background on this topic, we refer readers to \cite{GJ1} or \cite{GJ3}.

For modules of symmetric groups, we assume that readers are familiar with the notion of tableau, tabloid and polytabloid. Let $\F(n)$ be the trivial $\F\sym{n}$-module. For a composition $\lambda$ of $n$, we use $\F(\lambda)$ to denote the restriction of $\F(n)$ to the Young subgroup $\sym{\lambda}$.  The Young permutation module $M^{\lambda}$ with respect to $\lambda$ is the induced module $\F(\lambda){\uparrow^{\sym{n}}}$. It has a basis consisting of all $\lambda$-tabloids. Notice that $M^\lambda\cong M^\mu$ if $\mu$ can be rearranged to $\lambda$. Suppose now that $\lambda$ is a partition. The Specht module $S^\lambda$ is the submodule of $M^\lambda$ spanned by the $\lambda$-polytabloids. It has a basis given by the standard $\lambda$-polytabloids and dimension given by the hook formula. In the characteristic zero case, the Specht modules are the irreducible $\F\sym{n}$-modules. However, they are usually not irreducible when $p$ is positive. By the Nakayama conjecture, two Specht modules $S^\lambda,S^\mu$ for $\F\sym{n}$ lie in the same block if and only if $\kappa_p(\lambda)=\kappa_p(\mu)$. 

The isomorphism classes of indecomposable direct summands of Young permutation modules are called the Young modules and they are parametrized by $\P(n)$ such that the Young module $Y^\lambda$ is a direct summand of $M^\lambda$ with multiplicity one and, if $Y^\mu\mid M^\lambda$, then $\mu\trianglerighteq \lambda$ (see \cite[Theorem 3.1]{GJ2}), i.e., \[M^{\lambda}\cong Y^{\lambda}\oplus\bigoplus_{\mu\vartriangleright\lambda}k_{\lambda,\mu}Y^{\mu},\] where $k_{\lambda,\mu}=[M^{\lambda}: Y^{\mu}]$ are known as the $p$-Kostka numbers. Using the lexicographic order of $\P(n)$, we denote the $p$-Kostka matrix for $\F\sym{n}$ by $\Kostka$ whose $(\lambda,\mu)$-entry is $k_{\lambda,\mu}$. Notice that $\Kostka$ is upper uni-triangular.

We recall the following reductive formulae for $p$-Kostka numbers proved by Gill.
\begin{thm}[{see \cite[Theorems 13 and 14]{CGill}}]\label{T: Reductive}
Let $\lambda, \mu\in\P(m)$ and $\nu, \delta\in\P(n)$. We have the following statements. 
\begin{enumerate}[(i)]
 \item Let $\lambda_{1}$ be the first part of $\lambda$ and $\sum_{i=0}^{t}p^{i}\mu(i)$ be the $p$-adic expansion of $\mu$. If $s>t$ and $p^{s}>\lambda_{1}$ then $k_{\lambda+p^{s}\nu,\mu+p^{s}\delta}=k_{\lambda,\mu}k_{\nu,\delta}$.
 \item Let $\lambda_{2}$ be the second part of $\lambda$. If $\lambda_{2}<p^{s}$ then $k_{\lambda+(p^{s}r),\mu+(p^{s}r)}=k_{\lambda,\mu}$ for every $r\in \NN$.
 \end{enumerate}
\end{thm}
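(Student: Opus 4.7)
The plan is to prove both formulas using the characterization $k_{\alpha, \beta} = [M^\alpha : Y^\beta]$ of $p$-Kostka numbers as multiplicities of Young modules in Young permutation modules, combined with tensor-product factorizations available under the stated hypotheses on $p$-adic expansions.

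For part (i), the hypotheses $p^s > \lambda_1$ and $s > t$ together ensure that each part of $\lambda + p^s \nu$ separates cleanly in the $p$-adic sense into a ``low'' contribution from $\lambda$ and a ``high'' contribution from $p^s \nu$; the same holds for $\mu + p^s \delta$. My first step would be to derive a factorization of Young permutation modules, viewing $M^{\lambda + p^s \nu}$ as induced from a Young subgroup of the form $\sym{|\lambda|} \times \sym{p^s |\nu|}$ that decouples the low and high layers. The matching factorization for Young modules, $Y^{\mu + p^s \delta} \cong Y^\mu \otimes Y^{p^s \delta}$, would then be established by showing its vertex factors as $P_\mu \times p^s P_\delta$ and applying Theorem \ref{BC1} to match Brauer constructions on both sides. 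A final ingredient is the scaling identity $k_{p^s \nu, p^s \delta} = k_{\nu, \delta}$, which follows because $p^s$-scaling of all parts merely replaces single-element orbits by orbits of size $p^s$ in the combinatorial bijection underlying the $p$-Kostka numbers. Krull-Schmidt then delivers the product formula.

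For part (ii), the hypothesis $\lambda_2 < p^s$ makes the first row of $\lambda + (p^s r)$ dominate every other row by at least $p^s$. I would construct a functor $F$ from $\F\sym{|\lambda|}$-modules to $\F\sym{|\lambda| + p^s r}$-modules via induction through $\sym{|\lambda|} \times \sym{p^s r}$ against the trivial module on the second factor, and prove that $F$ sends $M^\lambda$ to $M^{\lambda + (p^s r)}$ and $Y^\mu$ to $Y^{\mu + (p^s r)}$, preserving summand multiplicities. The first isomorphism is formal from transitivity of induction; the second requires verifying that $F(Y^\mu)$ is indecomposable and correctly labelled, which is where the dominance condition on the first row is used to prevent the appearance of any Young summand $Y^{\mu'}$ with $\mu' \rhd \mu + (p^s r)$ that did not already correspond to some $Y^{\mu''} \rhd \mu$ in $M^\lambda$.

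The main obstacle in both parts is verifying the factorizations for \emph{Young modules} rather than for Young permutation modules. Young modules are defined only via the Krull-Schmidt decomposition of permutation modules, so showing that $Y^\mu \otimes Y^{p^s \delta}$ is indecomposable in part (i), or that $F(Y^\mu) \cong Y^{\mu + (p^s r)}$ in part (ii), requires explicit computation of vertices and Brauer constructions via Theorem \ref{BC1}. The $p$-adic separation hypotheses are invoked precisely to exclude unwanted mixing between the low and high $p$-adic layers --- without them, the tensor products and induction functors would produce additional indecomposable summands that destroy the multiplicativity.
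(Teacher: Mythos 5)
Note first that the paper does not prove Theorem~\ref{T: Reductive}; it is stated with a direct citation to Gill \cite{CGill}, so there is no internal proof to compare against. Evaluated on its own merits, your proposal has genuine gaps in both parts.

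For part~(i), the central claimed factorization ``$Y^{\mu+p^s\delta}\cong Y^\mu\otimes Y^{p^s\delta}$'' is ill-formed: $Y^\mu$ is an $\F\sym{m}$-module, $Y^{p^s\delta}$ is an $\F\sym{p^sn}$-module, and $Y^{\mu+p^s\delta}$ is an $\F\sym{m+p^sn}$-module, so no tensor product identity of that shape can hold. If you intend the induced module $(Y^\mu\boxtimes Y^{p^s\delta}){\uparrow^{\sym{m+p^sn}}}$, that module is almost never indecomposable and cannot equal a single Young module. Moreover, $M^{\lambda+p^s\nu}$ is induced from $\sym{\lambda+p^s\nu}=\prod_i\sym{\lambda_i+p^s\nu_i}$, which is \emph{not} contained in $\sym{m}\times\sym{p^sn}$ and does not decouple into low and high layers: each factor has size $\lambda_i+p^s\nu_i$, mixing both. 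The $p$-adic separation of each \emph{part} under the hypothesis $p^s>\lambda_1$ does not give a $p$-adic separation of the \emph{partition}, which is what a factorization of Young or Young permutation modules would actually require.

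For part~(ii), the proposed functor $F(V)=(V\boxtimes\F(p^sr)){\uparrow^{\sym{m+p^sr}}}$ does not send $M^\lambda$ to $M^{\lambda+(p^sr)}$. By transitivity of induction, $F(M^\lambda)\cong M^{\lambda\bullet(p^sr)}$, whose parts are $\lambda_1,\ldots,\lambda_k,p^sr$; this is isomorphic to $M^{\lambda+(p^sr)}$, whose parts are $\lambda_1+p^sr,\lambda_2,\ldots,\lambda_k$, only when the two part multisets coincide, which forces $\lambda_1=0$. Your functor adds a new row rather than lengthening the first one, and no amount of vertex or Brauer-construction bookkeeping repairs that. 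A working argument instead proceeds through a closed formula for $k_{\lambda,\mu}$ as a sum over $p$-adic decompositions $\lambda=\sum_i p^i\alpha^{(i)}$ (compare the identity $m_{\lambda,\O}=\sum_{\alpha\in\Lambda(\lambda,\O)}\prod_i\dim_\F M^{\alpha^{(i)}}$ and the matrix relation $\M=\Kostka\Y$ in Section~\ref{S: orbit numbers}), and shows that the stated hypotheses force the sum to split layer by layer, which is essentially what Gill does.
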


The regular module $\F\sym{n}$ is the Young permutation module $M^{(1^n)}$. Therefore the projective indecomposable $\F\sym{n}$-modules are Young modules. In fact, the Young module $Y^\lambda$ is projective if and only if $\lambda\in\RP(n)$. Let $\lambda\in\RP(n)$ and $\sgn(n)$ be the signature representation of $\F\sym{n}$. Since $Y^\lambda\otimes\sgn(n)$ is also projective indecomposable, we have \[Y^\lambda\otimes\sgn(n)\cong Y^{\m(\lambda)}\] for some unique partition $\m(\lambda)\in\RP(n)$. The map $\m:\RP(n)\to\RP(n)$ is called the Mullineux map (on $p$-restricted partitions) and is an involution. The $p$-regular version of Mullineux map was conjectured by Mullineux in \cite{GMullineux} and proved by Ford and Kleshchev in \cite{FordKleshchev}. In \cite{BrKu}, Brundan and Kujawa proved the $p$-restricted version.


We now discuss the Brauer constructions of Young permutation modules and Young modules as in \cite{KErdmann}.

For each $\lambda\in\P(n)$ and $\O\in\P^{(p)}(n)$, let $P$ be a $p$-subgroup of $\sym{n}$ such that $\orbit{\set{n}}{P}$ has type $\O$. Let $M_{\lambda,P}$ be the set of all $\lambda$-tabloids $\t$ such that each row of $\t$ is a union of some orbits in $[n]/P$. Notice that if $Q$ is another $p$-subgroup of $\sym{n}$ such that $\orbit{\set{n}}{Q}\simeq \orbit{\set{n}}{P}$ then $|M_{\lambda,P}|=|M_{\lambda,Q}|$. We write \[m_{\lambda,\O}=|M_{\lambda,P}|.\]

Since, for each $\lambda\in\P(n)$, the Young permutation module $M^\lambda$ has basis the $\lambda$-tabloids permuted by $\sym{n}$, and hence permuted by any $p$-subgroup of $\sym{n}$, Young permutation and Young modules are $p$-permutation $\F \sym{n}$-modules. Let $P$ be a $p$-subgroup of $\sym{n}$. Notice that a $\lambda$-tabloid $\t$ is fixed by $P$ if and only if every orbit in $\set{n}/P$ lies in a row of $\t$. By Corollary \ref{C: EG}, we have the following lemma.

\begin{lem}\label{L: dim of M(P)} Let $\lambda\in\P(n)$, let $P$ be a $p$-subgroup of $\sym{n}$ and suppose that $\orbit{\set{n}}{P}$ has type $\O$. Then \[\dim_\F M^\lambda(P)=m_{\lambda,\O},\] i.e., $\dim_\F M^\lambda(P)$ is the number of (unordered) ways to insert the orbits in $\orbit{\set{n}}{P}$ into the rows of $\lambda$. In particular, we have $\dim_\F M^\lambda(P)=\dim_\F M^\lambda(Q)$ if $\orbit{\set{n}}{P}\simeq \orbit{\set{n}}{Q}$.
\end{lem}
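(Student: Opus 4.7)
The plan is to apply Corollary \ref{C: EG} directly to the $\lambda$-tabloid basis of $M^\lambda$ and then identify the $P$-fixed tabloids with the set $M_{\lambda,P}$.

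First I would recall that $M^\lambda$ has as an $\F$-basis $\B$ the collection of $\lambda$-tabloids, and this basis is permuted by $\sym{n}$ and hence by any $p$-subgroup $P$; so $\B$ is a $p$-permutation basis with respect to $P$. By Corollary \ref{C: EG}, $M^\lambda(P)$ is isomorphic to the $\F$-span of $\B(P)$, the set of $\lambda$-tabloids fixed by every element of $P$. Thus $\dim_\F M^\lambda(P)=|\B(P)|$, and everything reduces to counting $P$-fixed tabloids.

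Next I would pin down the fixed-point condition. Write a $\lambda$-tabloid $\t$ as an ordered tuple of rows $(R_1,\dots,R_r)$, where each $R_i$ is a subset of $\set{n}$ of size $\lambda_i$ (taken modulo the order within the row). For $\sigma\in\sym{n}$ the action sends $\t$ to the tabloid with rows $(\sigma(R_1),\dots,\sigma(R_r))$, and two tabloids agree iff they have the same rows in the same positions. Hence $\sigma\in P$ fixes $\t$ iff $\sigma(R_i)=R_i$ for each $i$, i.e.\ iff each $R_i$ is $P$-stable. Since $P$-stable subsets of $\set{n}$ are precisely unions of orbits in $\orbit{\set{n}}{P}$, the tabloid $\t$ is $P$-fixed if and only if every row of $\t$ is a union of orbits in $\orbit{\set{n}}{P}$. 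This is exactly the defining condition for $\t\in M_{\lambda,P}$, so $\B(P)=M_{\lambda,P}$ and $\dim_\F M^\lambda(P)=|M_{\lambda,P}|=m_{\lambda,\O}$.

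Finally, for the last statement, if $\orbit{\set{n}}{P}\simeq\orbit{\set{n}}{Q}$ then both orbit sets have the same type $\O$, and from the definition of $m_{\lambda,\O}$ (which only records how many orbits of each size need to be inserted into the rows of shape $\lambda$) one sees $m_{\lambda,\O}$ depends only on $\O$, giving $\dim_\F M^\lambda(P)=m_{\lambda,\O}=\dim_\F M^\lambda(Q)$. There is no real obstacle here; the only point requiring a moment's care is the fixed-point analysis of tabloids, in particular the observation that a tabloid records an ordered sequence of rows, so fixing a tabloid really does force each individual row to be $P$-stable (rather than allowing $P$ to permute rows of equal size), which is what identifies $\B(P)$ with $M_{\lambda,P}$ on the nose.
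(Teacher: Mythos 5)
Your proof is correct and matches the paper's approach exactly: the paper also applies Corollary \ref{C: EG} to the tabloid basis of $M^\lambda$ after observing that a $\lambda$-tabloid $\t$ is fixed by $P$ if and only if every orbit in $\orbit{\set{n}}{P}$ lies within a row of $\t$; you simply spell out the fixed-point analysis (in particular, that rows of a tabloid are positionally distinguished, so fixing the tabloid forces each row to be $P$-stable) which the paper leaves implicit.
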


The precise structure of the Brauer construction $M^\lambda(P)$ when $\lambda\in\P(n)$, $P$ is a Sylow $p$-subgroup of $\sym{\O}$ and $\O\in\P^{(p)}(n)$ is given in \cite[Proposition 1]{KErdmann} but we shall not need it here.

Suppose that a normal subgroup $N$ of $G$ acts trivially on an $\F G$-module $M$. We write $\Def^G_{G/N}M$ for the deflation of $M$ to the quotient group $G/N$.
We now describe the vertices of Young modules and their Brauer constructions with respect to the vertices.

\begin{thm}[{\cite{GJR,KErdmann}}]\label{T: GJR} Let $\lambda\in\P(n)$. Then $Y^{\lambda}$ is relatively $\sym{\O_\lambda}$-projective. If $Y^{\lambda}$ is also relatively $H$-projective for some Young subgroup $H$ then $\sym{\O_\lambda}$ is $\sym{n}$-conjugate to a subgroup of $H$. Furthermore, $Y^\lambda$ has a vertex $P_\lambda$.
\end{thm}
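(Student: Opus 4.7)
The theorem has three assertions, which I plan to handle in sequence: relative $\sym{\O_\lambda}$-projectivity, identification of $P_\lambda$ as a vertex, and the Young-subgroup minimality.

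\textbf{Relative projectivity.} Let $\lambda=\sum_{i=0}^{r}p^{i}\lambda(i)$ be the $p$-adic expansion. The plan is to exhibit a composition $\mu$ such that $\sym{\mu}\subseteq\sym{\O_\lambda}$ and $Y^{\lambda}\mid M^{\mu}$; transitivity of induction, writing $M^{\mu}=\Ind^{\sym{n}}_{\sym{\O_\lambda}}\Ind^{\sym{\O_\lambda}}_{\sym{\mu}}\F$, then yields the relative projectivity. The natural candidate is $\mu=\O_{\lambda}$, which satisfies $\sym{\mu}=\sym{\O_\lambda}$, so the task reduces to $k_{\O_\lambda,\lambda}\geq 1$. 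I would prove this by iterated application of Theorem~\ref{T: Reductive}(i), peeling off the top $p$-adic layer $p^{r}\lambda(r)$ at each stage so that $k_{\O_\lambda,\lambda}$ factorises across the layers of the expansion. Verifying the hypotheses $s>t$ and $p^{s}>\lambda_{1}$ at each iterative step is where I expect the main combinatorial bookkeeping; I anticipate using Theorem~\ref{T: Reductive}(ii) as an auxiliary tool to adjust common parts between layers.

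\textbf{Vertex equals $P_\lambda$.} By the first part, any vertex of $Y^{\lambda}$ is $\sym{n}$-conjugate to a subgroup of $\sym{\O_\lambda}$, hence (being a $p$-group) to a subgroup of the Sylow $p$-subgroup $P_{\lambda}$. For the reverse containment, Theorem~\ref{Contain} reduces the task to proving $Y^{\lambda}(P_{\lambda})\neq 0$. My plan is to proceed by induction on the dominance order on $\P(n)$, taking as base case $\lambda=(n)$: then $Y^{(n)}=\F$ has vertex a Sylow $p$-subgroup of $\sym{n}$, which is precisely $P_{(n)}$. For the inductive step, Corollary~\ref{C: EG} applied to $M^{\lambda}\cong Y^{\lambda}\oplus\bigoplus_{\mu\vartriangleright\lambda}k_{\lambda,\mu}Y^{\mu}$ gives
\[
\dim_{\F}Y^{\lambda}(P_{\lambda})=m_{\lambda,\O_\lambda}-\sum_{\mu\vartriangleright\lambda}k_{\lambda,\mu}\dim_{\F}Y^{\mu}(P_{\lambda}),
\]
with the first term supplied by Lemma~\ref{L: dim of M(P)}. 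By the inductive hypothesis the vertex of $Y^{\mu}$ is $P_{\mu}$, so Theorem~\ref{Contain} together with Lemma~\ref{L: minimal subgroup} force $Y^{\mu}(P_{\lambda})=0$ unless $\O_{\lambda}$ is a rearrangement of a refinement of $\O_{\mu}$, and in the non-vanishing case the dimension is determined recursively via Theorem~\ref{BC1}(ii).

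The main obstacle, and where I expect the bulk of the work, is showing that the right-hand side of the displayed identity is strictly positive. I anticipate that this will require either an explicit combinatorial bijection on the $\lambda$-tabloid basis of $M^{\lambda}$ that are rows-of-$\O_{\lambda}$-orbits, or an appeal to the structural description of $M^{\lambda}(P)$ in \cite[Proposition 1]{KErdmann}, which identifies the Brauer construction at a Sylow $p$-subgroup of $\sym{\O}$ with a permutation module of a specific smaller group and thereby makes the non-triviality of the summand $Y^{\lambda}(P_{\lambda})$ accessible.

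\textbf{Young-subgroup minimality.} Given $Y^{\lambda}$ relatively $H$-projective with $H=\sym{\rho}$ a Young subgroup, the previous part shows that $P_{\lambda}$ is $\sym{n}$-conjugate to a subgroup of $H$; after conjugating, assume $P_{\lambda}\subseteq H$. The $P_{\lambda}$-orbits on $\set{n}$ coincide with the $\sym{\O_\lambda}$-orbits, and each must be contained in a single row of $\rho$ (a $\sym{\rho}$-orbit), by $P_{\lambda}\subseteq\sym{\rho}$. Consequently every permutation preserving the $\sym{\O_\lambda}$-orbit partition also preserves the $\sym{\rho}$-orbit partition, giving $\sym{\O_\lambda}\subseteq\sym{\rho}=H$ as required.
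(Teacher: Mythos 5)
The paper does not prove this theorem at all --- it is imported wholesale from \cite{GJR,KErdmann} --- so your argument has to stand on its own, and it has two genuine gaps, one of which is the entire content of the result.

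The fatal one is in the vertex computation. The identity $\dim_{\F}Y^{\lambda}(P_{\lambda})=m_{\lambda,\O_\lambda}-\sum_{\mu\rhd\lambda}k_{\lambda,\mu}\dim_{\F}Y^{\mu}(P_{\lambda})$ is indeed valid by Corollary \ref{C: EG}, and it does let you \emph{compute} the left side recursively, but it gives no lower bound whatsoever: nothing in your setup prevents the subtracted sum from exhausting $m_{\lambda,\O_\lambda}$. Establishing $Y^{\lambda}(P_{\lambda})\neq 0$ (equivalently, by Theorem \ref{Contain}, that $P_{\lambda}$ sits inside a vertex) is precisely the hard half of the theorem, and you explicitly defer it ("I anticipate that this will require\dots"). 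Worse, the escape route you name --- Erdmann's structural description of $M^{\lambda}(P)$, or Theorem \ref{Erdmann2} which exhibits $Y^{\lambda}(P_{\lambda})$ as a nonzero outer product of projectives --- is itself proved in \cite{KErdmann} on the back of the vertex theorem, so appealing to it here is circular. Without this positivity you only obtain that a vertex is contained in a conjugate of $P_{\lambda}$, not equality.

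The second gap is in the first assertion. Reducing to $k_{\bar{\O}_{\lambda},\lambda}\geq 1$ (where $\bar{\O}_{\lambda}$ is the partition rearrangement of $\O_{\lambda}$) is a reasonable and true target, but Theorem \ref{T: Reductive}(i) cannot deliver the layer-by-layer factorisation you describe. With the paper's definition of $\alpha+\beta$ (which adds to the \emph{leading} parts), applying the theorem with $s=r$ requires writing $\bar{\O}_{\lambda}=\alpha+p^{r}\nu$ with $\alpha\in\P(|\sum_{i<r}p^{i}\lambda(i)|)$ and $p^{r}>\alpha_{1}$; since the leading $|\lambda(r)|$ parts of $\bar{\O}_{\lambda}$ all equal $p^{r}$, this forces $\alpha_{1}=\cdots=\alpha_{|\lambda(r)|}=0$, which is impossible for a nonempty partition. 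So the hypotheses of Gill's theorem are never satisfied and $k_{\bar{\O}_{\lambda},\lambda}$ does not factorise by this route. (The pieces that are sound: the base case, the deduction that a vertex is conjugate into $P_{\lambda}$ \emph{once} relative $\sym{\O_\lambda}$-projectivity is known, and the Young-subgroup minimality argument, which is correct conditional on the vertex being $P_{\lambda}$.)
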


\begin{thm}[{\cite{KErdmann}}]\label{Erdmann2} Let $\sum_{i=0}^rp^i\lambda(i)$ be the $p$-adic expansion of $\lambda\in\P(n)$ and let $\beta=(a_0,a_1,\ldots,a_r)$ where $a_i=|\lambda(i)|$ for each $i=0,1,\ldots,r$. Then $\N_{\sym{\O_\lambda}}(P_\lambda)/P_\lambda$ acts trivially on $Y^\lambda(P_\lambda)$ and \[\Def^{\N_{\sym{n}}(P_\lambda)/P_\lambda}_{\N_{\sym{n}}(P_\lambda)/\N_{\sym{\O_\lambda}}(P_\lambda)}Y^{\lambda}(P_\lambda)\cong Y^{\lambda(0)}\boxtimes Y^{\lambda(1)}\boxtimes\cdots\boxtimes Y^{\lambda(r)}\] as $\F \sym{\beta}$-modules via the canonical isomorphism \[\sym{\beta}\cong \N_{\sym{n}}(P_\lambda)/\N_{\sym{\O_\lambda}}(P_\lambda)\cong (\N_{\sym{n}}(P_\lambda)/P_\lambda)/(\N_{\sym{\O_\lambda}}(P_\lambda)/P_\lambda).\]
\end{thm}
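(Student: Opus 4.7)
The plan is to locate $Y^\lambda(P_\lambda)$ inside $M^\lambda(P_\lambda)$ via Corollary \ref{C: EG}, and then combine Theorem \ref{BC1} with projectivity to pin down the factorization. First I would show that $\N_{\sym{\O_\lambda}}(P_\lambda)/P_\lambda$ acts trivially on $M^\lambda(P_\lambda)$, and hence on its summand $Y^\lambda(P_\lambda)$. By Lemma \ref{L: dim of M(P)} the space $M^\lambda(P_\lambda)$ is spanned by the $\lambda$-tabloids in which every row is a union of $P_\lambda$-orbits, and any element of $\N_{\sym{\O_\lambda}}(P_\lambda)$ preserves each $P_\lambda$-orbit setwise, so it fixes every such tabloid.

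Under the canonical isomorphism $\sym{\beta}\cong \N_{\sym{n}}(P_\lambda)/\N_{\sym{\O_\lambda}}(P_\lambda)$, each factor $\sym{a_i}$ permutes the $a_i$ orbits of size $p^i$. Grouping the fixed tabloids by their \emph{shape} $\mu=(\mu^{(0)},\ldots,\mu^{(r)})$, where $\mu^{(i)}$ is the composition recording how many $p^i$-orbits are placed in each row, yields the $\F\sym{\beta}$-module decomposition
\[
M^\lambda(P_\lambda)\;\cong\;\bigoplus_{\mu}\, M^{\mu^{(0)}}\boxtimes\cdots\boxtimes M^{\mu^{(r)}},
\]
where $\mu$ runs over tuples with $|\mu^{(i)}|=a_i$ and $\sum_i p^i\mu^{(i)}_j=\lambda_j$ for every $j$. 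The tuple $\mu=(\lambda(0),\ldots,\lambda(r))$ is always one such shape, because the description of the $p$-adic expansion in Subsection \ref{SS: composition} gives precisely $\lambda_j=\sum_i p^i\lambda(i)_j$.

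By Theorem \ref{T: GJR} the subgroup $P_\lambda$ is a vertex of $Y^\lambda$, so by Theorem \ref{BC1}(i) the module $Y^\lambda(P_\lambda)$ is projective indecomposable over $\F[\N_{\sym{n}}(P_\lambda)/P_\lambda]$. Because $\N_{\sym{\O_\lambda}}(P_\lambda)/P_\lambda$ is a normal $p'$-subgroup acting trivially, its deflation to $\sym{\beta}$ remains projective indecomposable and hence takes the form $Y^{\tau^{(0)}}\boxtimes\cdots\boxtimes Y^{\tau^{(r)}}$ for some $\tau^{(i)}\in\RP(a_i)$. To force $\tau^{(i)}=\lambda(i)$, I would invoke Theorem \ref{BC1}(ii): the multiplicity $[M^\lambda(P_\lambda):Y^\lambda(P_\lambda)]$ equals $[M^\lambda:Y^\lambda]=1$. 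By computing, for each candidate $(\tau^{(0)},\ldots,\tau^{(r)})$, the multiplicity of $Y^{\tau^{(0)}}\boxtimes\cdots\boxtimes Y^{\tau^{(r)}}$ inside every summand $M^{\mu^{(0)}}\boxtimes\cdots\boxtimes M^{\mu^{(r)}}$ in terms of $p$-Kostka numbers, only the choice $(\tau^{(0)},\ldots,\tau^{(r)})=(\lambda(0),\ldots,\lambda(r))$ should produce the required multiplicity one.

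The main obstacle is this last multiplicity argument: controlling which external tensor products $Y^{\tau^{(0)}}\boxtimes\cdots\boxtimes Y^{\tau^{(r)}}$ can occur inside each summand $M^{\mu^{(0)}}\boxtimes\cdots\boxtimes M^{\mu^{(r)}}$, and showing that the shape $(\lambda(0),\ldots,\lambda(r))$ is distinguished among all admissible shapes, requires a careful dominance comparison between the compositions $\mu^{(i)}$ (and their partition rearrangements) and the $p$-restricted partitions $\lambda(i)$. This is where Gill's reductive formulae (Theorem \ref{T: Reductive}) are most likely to enter, factorizing the $p$-Kostka count across the $p$-adic layers and reducing the global multiplicity to a product of layer-wise multiplicities.
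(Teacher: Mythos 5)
This theorem is imported from Erdmann \cite{KErdmann}; the paper supplies no proof of it, so there is nothing internal to compare your attempt against and I judge it on its own terms. Your first three steps are sound: every element of $\sym{\O_\lambda}$ preserves each $P_\lambda$-orbit setwise (the orbits are exactly the supports of the factors of $\sym{\O_\lambda}$, since a Sylow $p$-subgroup of $\sym{p^i}$ is transitive), so $\N_{\sym{\O_\lambda}}(P_\lambda)$ fixes every tabloid in $M_{\lambda,P_\lambda}$ and hence acts trivially on $M^\lambda(P_\lambda)$ and on its summand $Y^\lambda(P_\lambda)$; the shape decomposition $M^\lambda(P_\lambda)\cong\bigoplus_\mu M^{\mu^{(0)}}\boxtimes\cdots\boxtimes M^{\mu^{(r)}}$ is correct (it is precisely the Proposition 1 of \cite{KErdmann} that the paper mentions but declines to state); and by Theorems \ref{T: GJR} and \ref{BC1}(i) the module $Y^\lambda(P_\lambda)$ is projective indecomposable over $\F[\N_{\sym{n}}(P_\lambda)/P_\lambda]$, so its deflation through the trivially acting normal $p'$-subgroup $\N_{\sym{\O_\lambda}}(P_\lambda)/P_\lambda$ is indeed of the form $Y^{\tau^{(0)}}\boxtimes\cdots\boxtimes Y^{\tau^{(r)}}$ with each $\tau^{(i)}\in\RP(a_i)$.

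The genuine gap is the identification $\tau^{(i)}=\lambda(i)$, and the mechanism you propose cannot close it. Knowing $[M^\lambda(P_\lambda):Y^\lambda(P_\lambda)]=[M^\lambda:Y^\lambda]=1$ tells you only that whichever projective indecomposable $Y^\lambda(P_\lambda)$ happens to be occurs with multiplicity one; it does not tell you which one it is, because in general several distinct modules $Y^{\tau^{(0)}}\boxtimes\cdots\boxtimes Y^{\tau^{(r)}}$ occur in $M^\lambda(P_\lambda)$ with multiplicity one. The total multiplicity of such a module is $\sum_\mu\prod_i k_{\bar{\mu}^{(i)},\tau^{(i)}}$, where $\bar{\mu}^{(i)}$ is the partition rearrangement of $\mu^{(i)}$, and nothing forces this to equal $1$ for a unique $\tau$. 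To pin down $Y^\lambda(P_\lambda)$ one must subtract from $M^\lambda(P_\lambda)$ the contributions of the other summands $k_{\lambda,\nu}Y^\nu(P_\lambda)$ with $\nu\rhd\lambda$, which requires first identifying those Brauer quotients (an induction on the dominance order) and controlling which $\nu\rhd\lambda$ satisfy both $k_{\lambda,\nu}\neq 0$ and $Y^\nu(P_\lambda)\neq 0$. That control is essentially the Klyachko--Grabmeier product formula for $p$-Kostka numbers, which is the real content of the theorem; Gill's formulae as stated in Theorem \ref{T: Reductive} do not supply it, since they carry size hypotheses ($p^s>\lambda_1$, resp.\ $p^s>\lambda_2$) that fail across the general $p$-adic layers of a single partition. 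So your skeleton matches the standard approach, but the step you yourself flag as ``the main obstacle'' is missing and is not recoverable from the multiplicity-one observation alone.
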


\section{orbit numbers}\label{S: orbit numbers} In this section, we define the orbit numbers (see Definition \ref{D: orbit number}) labelled by $\P(n)\times \P^{(p)}(n)$. The numbers can be simultaneously defined as either the dimensions of the Brauer constructions of Young modules with respect to $p$-subgroups or the nonnegative integers $m$ where $[1]^m$ is the stable generic Jordan types of Young modules restricted to certain elementary Abelian $p$-subgroups.

We begin with the following lemma.


\begin{lem}\label{L: sgjt dim} Let $M$ be a $p$-permutation $\F G$-module and $E$ be an elementary Abelian $p$-subgroup of $G$. Then the stable generic Jordan type of $M{\downarrow_{E}}$ is $[1]^r$ where $r=\dim_{\F}M(E)$.
\end{lem}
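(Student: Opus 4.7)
The plan is to reduce the computation to single $E$-orbits on a $p$-permutation basis and then apply Lemma \ref{L: generic Jordan type}. Since $M$ is a $p$-permutation $\F G$-module, there exists a basis $\B$ of $M$ permuted by $E$. First I would decompose the $E$-set $\B$ into its $E$-orbits $\B = \bigsqcup_{i\in I} \mathcal{O}_i$ and choose, for each $i$, a representative $b_i\in \mathcal{O}_i$ with stabilizer $H_i = \mathrm{stab}_E(b_i)\leq E$. Then the $\F$-span of $\mathcal{O}_i$ is an $\F E$-submodule isomorphic to $\F(H_i){\uparrow^E}$, and so
\[
M{\downarrow_E} \;\cong\; \bigoplus_{i\in I}\F(H_i){\uparrow^E}.
\]

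By Lemma \ref{L: generic Jordan type}(i), the generic Jordan type of $M{\downarrow_E}$ is the sum of the generic Jordan types of the summands $\F(H_i){\uparrow^E}$, so it suffices to compute the contribution of each summand. I split the indexing set into $I = I_0 \sqcup I_1$, where $I_0 = \{i \in I : H_i = E\}$ (the fixed basis elements, giving $|\mathcal{O}_i|=1$) and $I_1 = \{i \in I : H_i \subsetneq E\}$ (the nontrivial orbits).

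For $i \in I_1$, Lemma \ref{L: generic Jordan type}(ii) applied to $H_i \subsetneq E$ says that $\F(H_i){\uparrow^E}$ is generically free, so its contribution to the stable generic Jordan type is zero. For $i \in I_0$, the summand $\F(E){\uparrow^E}$ is the one-dimensional trivial $\F E$-module, on which the element $u_\alpha$ acts as the identity; hence its generic Jordan type is exactly $[1]$, contributing $[1]$ to the stable generic Jordan type. Summing over $i\in I$ gives a stable generic Jordan type of $[1]^{|I_0|}$, i.e., $[1]^{|\B(E)|}$.

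Finally I would apply Corollary \ref{C: EG} to identify $|\B(E)|$ with $\dim_\F M(E)$, completing the argument. The steps are quite direct once the $p$-permutation basis decomposition is in place; the main thing to verify carefully is that the induced modules from proper subgroups really do give nothing stably, but this is precisely what Lemma \ref{L: generic Jordan type}(ii) delivers, so I do not expect any substantial obstacle.
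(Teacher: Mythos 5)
Your proposal is correct and follows essentially the same argument as the paper: decompose the $p$-permutation basis into $E$-orbits, note that the trivial orbits contribute trivial summands of type $[1]$ while the nontrivial orbits give modules induced from proper subgroups and hence are generically free by Lemma \ref{L: generic Jordan type}(ii), and identify the number of fixed basis elements with $\dim_\F M(E)$ via Corollary \ref{C: EG}. No issues.
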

\begin{proof}
Let $\B$ be a $p$-permutation basis of $M$ with respect to $E$ and suppose that $A_1,\ldots,A_r,B_1,\ldots,B_s$ are the orbits of action of $E$ on $\B$ such that $|A_i|=1$ for $i=1,2,\ldots, r$ and $|B_j|>1$ for $j=1,\ldots,s$. Then \[M{\downarrow_{E}}\cong \left (\bigoplus_{i=1}^r\F_E\right )\oplus\left (\bigoplus_{j=1}^{s} \F_{H_j}{\uparrow^{E}}\right )\] as $\F E$-modules where $H_j$ is the stabiliser of $b_j\in B_j$ for all $j=1,2,\ldots, s$ and $\F_{H_j},\F_E$ are the trivial modules for $\F H_j$ and $\F E$ respectively. Since $H_j$ is a proper subgroup of $E$, by Lemma \ref{L: generic Jordan type}, $M{\downarrow_E}$ has stable generic Jordan type $[1]^r$. By Corollary \ref{C: EG}, $\dim_\F M(E)=r$. The result now follows.
\end{proof}

In the case of the Young permutation module $M^\lambda$, Lemmas \ref{L: dim of M(P)} and \ref{L: sgjt dim} assert that the generic Jordan type of $M^\lambda{\downarrow_E}$ is $[1]^r$ where \[r=\dim_\F M^\lambda(E)=m_{\lambda,\O}\] and $\orbit{\set{n}}{E}$ has type $\O$.

We now prove the main result of this section.

\begin{thm} \label{T: Yinvarant}
Let $\lambda\in\P(n)$ and $P,E$ be $p$-subgroups of $\sym{n}$ such that $\orbit{\set{n}}{P}\simeq \set{n}{E}$. Then \[\dim_\F Y^\lambda(P)=\dim_\F Y^\lambda(E).\] Suppose further that $E$ is elementary Abelian. We have $\dim_\F Y^\lambda(E)=m$ where the stable generic Jordan type of $Y^{\lambda}{\downarrow_{E}}$ is $[1]^m$.
\end{thm}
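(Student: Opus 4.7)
The plan is to handle the two claims separately. The second claim is essentially immediate: since $Y^\lambda$ is a direct summand of the $p$-permutation module $M^\lambda$, it is itself a $p$-permutation $\F\sym{n}$-module, so Lemma \ref{L: sgjt dim} applied to the elementary Abelian subgroup $E$ yields $\dim_\F Y^\lambda(E) = m$, where $[1]^m$ is the stable generic Jordan type of $Y^\lambda{\downarrow_E}$. So the substantive work lies in establishing that $\dim_\F Y^\lambda(P)$ depends only on the orbit type of $\orbit{\set{n}}{P}$.

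For the first claim, my plan is to reduce to the analogous statement for Young permutation modules (Lemma \ref{L: dim of M(P)}) and strip off the summands of $M^\lambda$ one at a time via Krull--Schmidt. Concretely, the decomposition
\[ M^\lambda \cong Y^\lambda \oplus \bigoplus_{\mu \vartriangleright \lambda} k_{\lambda,\mu} Y^\mu \]
from Subsection \ref{SS: sym}, combined with the additivity of the Brauer construction (Corollary \ref{C: EG}), yields
\[ \dim_\F Y^\lambda(P) = m_{\lambda,\O} - \sum_{\mu \vartriangleright \lambda} k_{\lambda,\mu} \dim_\F Y^\mu(P), \]
where $\O$ denotes the type of $\orbit{\set{n}}{P}$.

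I would then perform a reverse induction along the dominance order on $\P(n)$ (refined, if one prefers a total order, to the lexicographic order). The base case $\lambda = (n)$ is trivial: $M^{(n)} = Y^{(n)}$ is the trivial module, so the claim reduces directly to Lemma \ref{L: dim of M(P)}. For the inductive step, assume the result for every $\mu \vartriangleright \lambda$; by Lemma \ref{L: dim of M(P)}, $m_{\lambda,\O}$ depends only on $\O$, and by the inductive hypothesis so does each $\dim_\F Y^\mu(P)$ appearing in the sum, whence $\dim_\F Y^\lambda(P)$ depends only on $\O$. Since $\orbit{\set{n}}{P} \simeq \orbit{\set{n}}{E}$ means $P$ and $E$ share the same orbit type, the equality $\dim_\F Y^\lambda(P) = \dim_\F Y^\lambda(E)$ follows. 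I do not anticipate a serious obstacle: the whole argument is a triangular linear inversion driven by the upper uni-triangularity of the $p$-Kostka matrix $\Kostka$ (recorded at the end of Subsection \ref{SS: sym}), and the only care required is to run the induction consistently from the top of the dominance order downward.
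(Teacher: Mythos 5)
Your proposal is correct and follows essentially the same route as the paper: both prove the second claim directly from Lemma \ref{L: sgjt dim} (since $Y^\lambda$ is a $p$-permutation module), and both prove the first claim by applying Corollary \ref{C: EG} to the decomposition $M^\lambda \cong Y^\lambda \oplus \bigoplus_{\mu\vartriangleright\lambda} k_{\lambda,\mu}Y^\mu$, invoking Lemma \ref{L: dim of M(P)}, and inducting downward from $\lambda=(n)$ along the dominance order.
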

\begin{proof}
We prove that $\dim_{\F}Y^{\lambda}(P)=\dim_{F}Y^{\lambda}(E)$ by using induction on the dominance order of $\P(n)$. In the base case, since $Y^{(n)}\cong M^{(n)}$ is the trivial $\F\sym{n}$-module, we have $\dim_{\F}Y^{(n)}(P)=\dim_{\F}Y^{(n)}(E)=1$ by Lemma \ref{L: dim of M(P)}. Suppose that $\dim_\F Y^\mu(P)=\dim_\F Y^\mu(E)$ for all $\mu\rhd\lambda$. By Corollary \ref{C: EG}, we have
\begin{align*}
M^{\lambda}(P)&\cong Y^{\lambda}(P)\oplus\displaystyle{\bigoplus_{\mu\rhd\lambda}}k_{\lambda,\mu}Y^{\mu}(P),\\
M^{\lambda}(E)&\cong Y^{\lambda}(E)\oplus\displaystyle{\bigoplus_{\mu\rhd\lambda}}k_{\lambda,\mu}Y^{\mu}(E).
\end{align*} Counting the dimensions of the above equations, using Lemma \ref{L: dim of M(P)} and induction on the dominance order, we obtain that $\dim_\F Y^{\lambda}(P)=\dim_\F Y^\lambda(E)$.
Suppose further now that $E$ is elementary Abelian. Since Young modules are $p$-permutation as direct summands of Young permutation modules, the second assertion follows from Lemma \ref{L: sgjt dim}.
\end{proof}


In the view of Theorem \ref{T: Yinvarant}, we can now define the orbit number.

\begin{defn}\label{D: orbit number}
Let $\lambda\in\P(n)$, $\O\in\P^{(p)}(n)$ and let $P,E$ be $p$-subgroups of $\sym{n}$ such that both $\orbit{\set{n}}{P}$ and $\orbit{\set{n}}{E}$ have type $\O$ and $E$ is elementary Abelian. The orbit number $y_{\lambda,\O}$ is defined as the following common numbers: \[y_{\lambda,\O}:=\dim_\F Y^\lambda(P)=b,\] where $[1]^b$ is the stable generic Jordan type of $Y^\lambda{\downarrow_E}$.
\end{defn}




Fix $n\in \NN$. Let both $\P(n)$ and $\P^{(p)}(n)$ be ordered by the lexicographic order. Let $\Y,\M$ be the $\P(n)\times\P^{(p)}(n)$-matrices whose $(\lambda,\O)$-entries of $\Y,\M$ are the orbit number $y_{\lambda,\O}$ and $m_{\lambda,\O}$ respectively. By Theorem \ref{BC1}(ii), we have \begin{equation}\label{Eq: 1} m_{\lambda,\O}=y_{\lambda,\O}+\sum_{\mu\rhd \lambda}k_{\lambda,\mu}y_{\mu,\O},
\end{equation} or equivalently, $\M=\Kostka\Y$ where, recall that, $k_{\lambda,\mu}=[M^\lambda:Y^\mu]$ and $\Kostka$ is the $p$-Kostka matrix of $\F\sym{n}$. The $(\lambda,\O)$-entry $m_{\lambda,\O}$ of $\M$ has a combinatorial description given by Lemma \ref{L: dim of M(P)}. Suppose further that $\O=(1^{a_{0}}, p^{a_{1}},\ldots, (p^{r})^{a_{r}})$ and let $\Lambda(\lambda,\O)$ be the set consisting of tuples of compositions $\alpha=(\alpha^{(0)}, \alpha^{(1)},\ldots, \alpha^{(r)})$ such that $\lambda=\sum_{i=0}^{r}p^i\alpha^{(i)}$ (not necessarily the $p$-adic expansion of $\lambda$) and $|\alpha^{(i)}|=a_i$ for all $i=0,1,\ldots, r$. It is easy to see that the number $m_{\lambda,\O}$ can be described as \[m_{\lambda,\O}=\sum_{\alpha\in\Lambda(\lambda,\O)}\prod_{i=0}^r \dim_\F M^{\alpha^{(i)}}.\]

To end this section, we give characterisations when an orbit number is nonzero. The following lemma is straightforward following Theorem \ref{Erdmann2}.



\begin{lem}\label{L: prod of dims} Let $\sum^s_{i=0}p^i\lambda(i)$ be the $p$-adic expansion of $\lambda\in\P(n)$. Then \[y_{\lambda,\O_\lambda}=\prod_{i=0}^s\dim_\F Y^{\lambda(i)}\neq 0.\]
\end{lem}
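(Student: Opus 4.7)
The plan is to invoke Definition \ref{D: orbit number} with the specific choice $P=P_\lambda$ and then read off the dimension from Theorem \ref{Erdmann2}. Concretely, by construction $\orbit{\set{n}}{P_\lambda}$ has type $\O_\lambda$, so Theorem \ref{T: Yinvarant} together with Definition \ref{D: orbit number} gives
\[y_{\lambda,\O_\lambda}=\dim_\F Y^\lambda(P_\lambda).\]

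Next, I would apply Theorem \ref{Erdmann2}. Since $\N_{\sym{\O_\lambda}}(P_\lambda)/P_\lambda$ acts trivially on $Y^\lambda(P_\lambda)$, deflating from $\N_{\sym{n}}(P_\lambda)/P_\lambda$ to $\N_{\sym{n}}(P_\lambda)/\N_{\sym{\O_\lambda}}(P_\lambda)\cong\sym{\beta}$ does not change the underlying $\F$-dimension. The theorem then identifies the deflated module with the outer tensor product $Y^{\lambda(0)}\boxtimes\cdots\boxtimes Y^{\lambda(s)}$, whose dimension is the product $\prod_{i=0}^s\dim_\F Y^{\lambda(i)}$.

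Finally, to see the product is nonzero it suffices to observe that each $\lambda(i)$ is $p$-restricted, so as noted in Subsection \ref{SS: sym} each $Y^{\lambda(i)}$ is a (non-zero) projective indecomposable $\F\sym{|\lambda(i)|}$-module; in particular $\dim_\F Y^{\lambda(i)}\geq 1$ for every $i$, and the product is positive. There is no real obstacle here — the statement is essentially a direct translation of Theorem \ref{Erdmann2} into the language of orbit numbers, with the only point worth spelling out being that deflation preserves dimension.
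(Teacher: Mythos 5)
Your proof is correct and takes essentially the same approach as the paper: the authors simply remark that the lemma ``is straightforward following Theorem \ref{Erdmann2}'', and your argument --- identifying $y_{\lambda,\O_\lambda}$ with $\dim_\F Y^\lambda(P_\lambda)$ via Definition \ref{D: orbit number}, noting that deflation preserves the underlying $\F$-dimension, and reading off the product from the outer tensor product, with nonvanishing because each $Y^{\lambda(i)}$ is a nonzero module --- is exactly the intended deduction.
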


For example, if $\lambda$ has a unique $\lambda(\ell)$ in its $p$-adic expansion with more than one part, then \[y_{\lambda,\O_\lambda}=\dim_\F Y^{\lambda(\ell)}.\] This happens, in particular, when $Y^\lambda$ is a non-projective periodic Young module (see \cite[Corollary 3.3.3]{DHDN}).

Recall that $P_\lambda$ is a fixed Sylow $p$-subgroup of $\sym{\O_\lambda}$ as in Subsection \ref{SS: composition}.

\begin{thm}\label{T:eolambda}
Let $\lambda\in\P(n)$, $\O\in \P^{(p)}(n)$ and $P,E$ be $p$-subgroups such that both $\orbit{\set{n}}{P},\orbit{\set{n}}{E}$ have type $\O$ and $E$ is elementary Abelian. Then the following statements are equivalent.
\begin{enumerate}[(i)]
\item $y_{\lambda,\O}\neq0$.
\item $Y^\lambda{\downarrow_E}$ is not generically free.
\item $P$ is conjugate to a subgroup of $P_\lambda$.
\item $\O$ is rearranged to be a refinement of $\O_{\lambda}$.
\end{enumerate} In any of the cases above, we have $y_{\lambda,\O}\geq y_{\lambda,\O_\lambda}\neq 0$.
\end{thm}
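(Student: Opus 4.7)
The plan is to establish the four equivalences by a short chain of direct implications drawing on results already in the paper, and then derive the inequality via a $p$-permutation basis comparison.

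First, (i) $\Leftrightarrow$ (ii) is immediate from Definition \ref{D: orbit number}: the stable generic Jordan type of $Y^\lambda{\downarrow_E}$ is $[1]^{y_{\lambda,\O}}$, so this module is generically free precisely when $y_{\lambda,\O}=0$. Next, (i) $\Leftrightarrow$ (iii) follows by applying Brou\'e's Theorem \ref{Contain} to the $p$-permutation module $Y^\lambda$, whose vertex is $P_\lambda$ by Theorem \ref{T: GJR}: we have $y_{\lambda,\O}=\dim_\F Y^\lambda(P)\neq 0$ if and only if $Y^\lambda(P)\neq 0$, which occurs if and only if $P$ is contained in a $\sym{n}$-conjugate of $P_\lambda$. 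Finally, (iii) $\Leftrightarrow$ (iv) is exactly Lemma \ref{L: minimal subgroup}.

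For the inequality, suppose the four equivalent conditions hold. Lemma \ref{L: prod of dims} gives $y_{\lambda,\O_\lambda}=\prod_{i=0}^s\dim_\F Y^{\lambda(i)}\neq 0$, so it remains to show $y_{\lambda,\O}\geq y_{\lambda,\O_\lambda}$. Up to $\sym{n}$-conjugation (which changes neither $\dim_\F Y^\lambda(P)$ nor the type of $\orbit{\set{n}}{P}$) we may assume $P\subseteq P_\lambda$ by (iii). Since $Y^\lambda$ is a $p$-permutation $\F\sym{n}$-module, one can fix a basis $\B$ of $Y^\lambda$ permuted by $P_\lambda$; as $P\subseteq P_\lambda$, this same $\B$ is permuted by $P$, and every element fixed by $P_\lambda$ is fixed by $P$, i.e.\ $\B(P_\lambda)\subseteq \B(P)$. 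Corollary \ref{C: EG} then yields
\[
y_{\lambda,\O}=\dim_\F Y^\lambda(P)=|\B(P)|\geq |\B(P_\lambda)|=\dim_\F Y^\lambda(P_\lambda)=y_{\lambda,\O_\lambda},
\]
completing the proof.

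I expect no genuine obstacle: each equivalence is a one-line deduction from a result already cited in the preliminaries, and the inequality reduces to the observation that the set of basis elements fixed by a subgroup only grows as the subgroup shrinks. The only mild subtlety is to remember that a $p$-permutation basis with respect to $P_\lambda$ automatically serves as a $p$-permutation basis with respect to every subgroup of $P_\lambda$, which is what allows the same $\B$ to compute both $Y^\lambda(P_\lambda)$ and $Y^\lambda(P)$ via Corollary \ref{C: EG}.
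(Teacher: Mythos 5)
Your proposal is correct and follows essentially the same route as the paper: Definition \ref{D: orbit number} for (i)$\Leftrightarrow$(ii), Theorem \ref{Contain} together with the fact that $P_\lambda$ is a vertex of $Y^\lambda$ for (i)$\Leftrightarrow$(iii), Lemma \ref{L: minimal subgroup} for (iii)$\Leftrightarrow$(iv), and the containment $\B(P_\lambda)\subseteq\B(P)$ of fixed basis elements for the inequality. The only cosmetic difference is that you conjugate $P$ into $P_\lambda$ while the paper conjugates $P_\lambda$ to a group $Q$ containing $P$ and then invokes Theorem \ref{T: Yinvarant} to identify $\dim_\F Y^\lambda(Q)$ with $\dim_\F Y^\lambda(P_\lambda)$; both are valid.
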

\begin{proof} The equivalence of parts (i), (ii) and (iii) follows from Definition \ref{D: orbit number} and Theorem \ref{Contain}. The equivalence of parts (iii) and (iv) is given by Lemma \ref{L: minimal subgroup}. We now prove the last assertion. Let $Q$ be a conjugate of $P_{\lambda}$ in $\sym{n}$ such that $P$ is a $p$-subgroup of $Q$. Let $\B$ be a $p$-permutation basis of $Y^{\lambda}$ with respect to $Q$. Then $\B(Q)\subseteq\B(P)$. By Corollary \ref{C: EG} and Theorem \ref{T: Yinvarant}, we have \[y_{\lambda,\O}=\dim_{\F}Y^{\lambda}(P)\geq\dim_{\F}Y^{\lambda}(Q)=\dim_{\F}Y^{\lambda}(P_{\lambda})=y_{\lambda,\O_{\lambda}}.\]
\end{proof}

\section{Some computation}\label{S: Some computation}
In this section, we present some equalities among the orbit numbers we have defined in Definition \ref{D: orbit number}. The main results are Theorems \ref{T: Mullineux}, \ref{T: reduction 3} and \ref{T: Reductive5}. 

We present our first results which follows easily from Lemma \ref{L: prod of dims}. Recall that $\m$ is the Mullineux map on $p$-restricted partitions such that $Y^\lambda\otimes\sgn(n)\cong Y^{\m(\lambda)}$ for all $\lambda\in\RP(n)$.


\begin{thm}\label{T: Mullineux} Let $\sum_{i=0}^sp^{i}\lambda(i)$ be the $p$-adic expansion of $\lambda$ and let \[\mu=\sum_{i=0}^sp^{k_i}\m^{\ell_i}(\lambda(i)),\] where $k_0,\ldots,k_s$ are mutually distinct nonnegative integers and, for each $i=0,1,\ldots, s$, $\ell_i$ is either 0 or 1. Then $y_{\lambda,\O_\lambda}=y_{\mu,\O_\mu}$.
\end{thm}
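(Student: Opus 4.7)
The plan is to reduce everything to Lemma \ref{L: prod of dims} and the fact that the Mullineux involution preserves dimensions of Young modules.

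First I would verify that the given expression $\mu=\sum_{i=0}^s p^{k_i}\m^{\ell_i}(\lambda(i))$, after reordering the summands according to the size of $k_i$ and padding with empty partitions for any missing exponents between $0$ and $\max_i k_i$, is exactly the $p$-adic expansion of $\mu$. This is legitimate because the Mullineux map sends $\RP(m)$ to $\RP(m)$, so each $\m^{\ell_i}(\lambda(i))$ is a $p$-restricted partition of $|\lambda(i)|$; and the $k_i$ are mutually distinct, so no two $p$-restricted summands collide on the same power of $p$. In particular, the $p$-adic expansion of $\mu$ has the component at exponent $k_i$ equal to $\m^{\ell_i}(\lambda(i))$ (a partition of $|\lambda(i)|$), while the remaining components are empty.

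Consequently, reading off the orbit type, $\O_\mu$ is the rearrangement of the composition whose entry $p^{k_i}$ appears with multiplicity $|\lambda(i)|$ for $i=0,1,\ldots,s$. Then Lemma \ref{L: prod of dims} yields
\begin{equation*}
y_{\mu,\O_\mu}=\prod_{i=0}^s\dim_\F Y^{\m^{\ell_i}(\lambda(i))}.
\end{equation*}

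Next I would invoke that tensoring by the sign representation is an exact self-equivalence of $\F\sym{|\lambda(i)|}$-mod, so it preserves $\F$-dimension; in particular, since $Y^{\m(\lambda(i))}\cong Y^{\lambda(i)}\otimes\sgn(|\lambda(i)|)$ when $\ell_i=1$ (and the identity holds trivially when $\ell_i=0$), we have $\dim_\F Y^{\m^{\ell_i}(\lambda(i))}=\dim_\F Y^{\lambda(i)}$ for each $i$. Applying Lemma \ref{L: prod of dims} once more to $\lambda$ itself gives $y_{\lambda,\O_\lambda}=\prod_{i=0}^s\dim_\F Y^{\lambda(i)}$, and comparing the two products completes the argument.

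There is essentially no obstacle here: the only point requiring a sentence of justification is that the displayed expression for $\mu$ really is a valid $p$-adic expansion (i.e., that each $\m^{\ell_i}(\lambda(i))$ lies in $\RP(|\lambda(i)|)$), which is immediate from Brundan–Kujawa/Ford–Kleshchev. Everything else is a direct application of Lemma \ref{L: prod of dims} combined with the dimension-preserving property of $-\otimes\sgn$.
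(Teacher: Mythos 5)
Your proposal is correct and follows essentially the same route as the paper: observe that the given sum is the $p$-adic expansion of $\mu$ (since the $k_i$ are distinct and the Mullineux map preserves $p$-restrictedness), apply Lemma \ref{L: prod of dims} to both $\lambda$ and $\mu$, and use that $-\otimes\sgn$ preserves dimension. The paper's proof is just a terser version of the same argument.
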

\begin{proof} Notice that $\sum_{i=0}^sp^{k_i}\m^{\ell_i}(\lambda(i))$ is the $p$-adic expansion of $\mu$ since $k_i$'s are all distinct. By Lemma \ref{L: prod of dims}, we have \[y_{\lambda,\O_\lambda}=\prod_{i=0}^s\dim_\F{Y^{\lambda(i)}}=\prod_{i=0}^s\dim_\F{Y^{\m^{\ell_i}(\lambda(i))}}=y_{\mu,\O_\mu}.\]
\end{proof}

Recall that $\beta\bullet \gamma$ denote the concatenation of two compositions $\beta, \gamma$. We need the following lemmas to prove our next result Theorem \ref{T: reduction 3}.

\begin{lem}\label{L: ps partition} Let $m,n,s\in\NN$ such that $p^s>m$. If $\lambda+p^s\mu=\alpha+p^s\beta$ for some $\lambda,\alpha\in\C(m)$ and $\mu,\beta\in\C(n)$ then $\lambda=\alpha$ and $\mu=\beta$.
\end{lem}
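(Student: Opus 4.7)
The plan is to compare the two expressions coordinate-by-coordinate after padding with trailing zeros to a common length, and then invoke uniqueness of division by $p^s$.

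First I would pad each of $\lambda,\alpha,\mu,\beta$ with trailing zeros, if necessary, so that all four sequences have some common length $L$. This does not change the underlying compositions nor the sums $\lambda+p^s\mu$ and $\alpha+p^s\beta$. By the definition of the sum of two compositions recalled in Subsection \ref{SS: composition}, the hypothesis then translates into the system of coordinate-wise identities
\[\lambda_i+p^s\mu_i=\alpha_i+p^s\beta_i\qquad(i=1,\ldots,L).\]

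Next, since $\lambda,\alpha\in\C(m)$ have nonnegative parts summing to $m$, each $\lambda_i$ and each $\alpha_i$ lies in $\{0,1,\ldots,m\}$, and hence in $\{0,1,\ldots,p^s-1\}$ by the hypothesis $p^s>m$. Rearranging the $i$-th identity gives $\lambda_i-\alpha_i=p^s(\beta_i-\mu_i)$, so $p^s$ divides $\lambda_i-\alpha_i$, which is an integer of absolute value at most $m<p^s$. This forces $\lambda_i=\alpha_i$, and cancelling $p^s$ then yields $\mu_i=\beta_i$. Letting $i$ range over $\{1,\ldots,L\}$ gives $\lambda=\alpha$ and $\mu=\beta$.

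There is no real obstacle here: the argument is essentially uniqueness of the base-$p^s$ representation applied in each coordinate. The only minor point to keep track of is the convention that two compositions differing only by trailing zeros are to be identified, which lets us make all four compositions have a common length before comparing entries.
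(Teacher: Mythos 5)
Your proof is correct and follows essentially the same route as the paper: both reduce to the coordinate-wise identity $\lambda_i-\alpha_i=p^s(\beta_i-\mu_i)$ and use the bound $|\lambda_i-\alpha_i|\leq m<p^s$ to force $\lambda_i=\alpha_i$ and then $\mu_i=\beta_i$. The only cosmetic difference is that you phrase the final step as divisibility by $p^s$ while the paper argues by contradiction on the sign of $\lambda_i-\alpha_i$.
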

\begin{proof} If $\lambda_i>\alpha_i$ for some $i$ then \[p^s>\lambda_i-\alpha_i=p^s(\beta_i-\mu_i)\geq p^s,\] which is a contradiction. Similarly, we must have $\lambda_i\geq\alpha_i$. Therefore $\lambda=\alpha$ and hence $\mu=\beta$.
\end{proof}

\begin{lem}\label{L: Reductive2} Let $\lambda\in\P(m)$, $\mu\in\P(n)$, $\O\in\P^{(p)}(m)$, $\O'\in\P^{(p)}(n)$ and $s\in\NN$ such that $p^s>m$. Then \[m_{\lambda+p^s\mu,\O\bullet p^s\O'}=m_{\lambda,\O}m_{\mu,\O'}.\]
\end{lem}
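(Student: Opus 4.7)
The plan is to express both sides via the combinatorial formula
\[m_{\lambda,\O}=\sum_{\alpha\in\Lambda(\lambda,\O)}\prod_{i}\dim_\F M^{\alpha^{(i)}}\]
recorded immediately after \eqref{Eq: 1}, and to establish a bijection between $\Lambda(\lambda+p^s\mu,\O\bullet p^s\O')$ and $\Lambda(\lambda,\O)\times\Lambda(\mu,\O')$ that is compatible with this product.

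First I would write $\O=(1^{a_0},p^{a_1},\ldots,(p^r)^{a_r})$ and $\O'=(1^{b_0},p^{b_1},\ldots,(p^{r'})^{b_{r'}})$. Since $p^r\le m<p^s$, we have $r<s$, so in canonical form $\O\bullet p^s\O'$ has entries $a_i$ at position $i$ for $0\le i\le r$, zero at positions $r+1,\ldots,s-1$ (if any), and $b_j$ at position $s+j$ for $0\le j\le r'$.

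For the bijection, take $\gamma=(\gamma^{(0)},\ldots,\gamma^{(s+r')})\in\Lambda(\lambda+p^s\mu,\O\bullet p^s\O')$. The cardinality constraints force $\gamma^{(i)}=\varnothing$ for $r<i<s$, so it is natural to split $\gamma$ into $\alpha=(\gamma^{(0)},\ldots,\gamma^{(r)})$ and $\beta=(\gamma^{(s)},\ldots,\gamma^{(s+r')})$. Setting $\lambda'=\sum_{i=0}^{r}p^{i}\alpha^{(i)}\in\C(m)$ and $\mu'=\sum_{j=0}^{r'}p^{j}\beta^{(j)}\in\C(n)$, the equation $\sum_i p^i\gamma^{(i)}=\lambda+p^s\mu$ reads $\lambda'+p^s\mu'=\lambda+p^s\mu$. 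Lemma \ref{L: ps partition} then yields $\lambda'=\lambda$ and $\mu'=\mu$, so $\alpha\in\Lambda(\lambda,\O)$ and $\beta\in\Lambda(\mu,\O')$. The inverse assignment is concatenation, padded with empty compositions in positions $r+1,\ldots,s-1$.

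Finally, using the convention $\dim_\F M^{\varnothing}=1$, the bijection gives
\[m_{\lambda+p^s\mu,\O\bullet p^s\O'}=\sum_{(\alpha,\beta)}\Bigl(\prod_{i=0}^{r}\dim_\F M^{\alpha^{(i)}}\Bigr)\Bigl(\prod_{j=0}^{r'}\dim_\F M^{\beta^{(j)}}\Bigr)=m_{\lambda,\O}\,m_{\mu,\O'},\]
as required. There is no real obstacle here; the only bookkeeping item is carefully tracking the implicit zero entries between positions $r$ and $s$ in $\O\bullet p^s\O'$, which is exactly where the hypothesis $p^s>m$ is invoked through Lemma \ref{L: ps partition}.
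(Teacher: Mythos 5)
Your proof is correct and follows essentially the same route as the paper's: the paper constructs a direct bijection between the tabloid sets $M_{\lambda+p^s\mu,R}$ and $M_{\lambda,P}\times M_{\mu,Q}$, exploiting the size gap between orbits coming from $\O$ and those coming from $p^s\O'$ and invoking Lemma~\ref{L: ps partition} to pin down the split, while you phrase the same bijection at the level of the $\Lambda$-tuples indexing the combinatorial formula for $m_{\lambda,\O}$. The two presentations differ only in bookkeeping (tabloids versus tuples of compositions weighted by $\dim_\F M^{\alpha^{(i)}}$), and both place the weight of the argument on Lemma~\ref{L: ps partition} together with the hypothesis $p^s>m$.
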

\begin{proof} Let $\O''=\O\bullet p^{s}\O'$ and let $A=\orbit{\set{m}}{P}$, $B=\orbit{\set{n}}{Q}$ and $C=\orbit{\set{m+p^{s}n}}{R}$. Furthermore, let $P,Q,R$ be $p$-subgroups of $\sym{m},\sym{n},\sym{m+p^sn}$ such that $A,B,C$ have types $\O,\O',\O''$ respectively.  Since $p^s>m$, we may identify the set $C$ with the set $A\cup B$ where an orbit of size $p^i$ in $C$ is identified with an orbit of size $p^i$ in $A$ if $i<s$ and an orbit of size $p^{i-s}$ in $B$ if $i\geq s$. Recall the notation $M_{\lambda,P}$ defined in Subsection \ref{SS: sym}.  To prove the result, we construct a bijection between the sets $X:=M_{\lambda+p^s\mu,R}$ and $Y:=M_{\lambda,P}\times M_{\mu,Q}$. We define $g:Y\to X$ as follows. For each $(\t,\s)\in Y$, let $g(\t,\s)\in X$ be the $(\lambda+p^s\mu)$-tabloid whose $i$th row contains an orbit of $C$ if and only if its corresponding orbit is in $A$ and belongs to the $i$th row of $\t$ or it is in $B$ and belongs to the $i$th row of $\s$. Conversely, we define $f:X\to Y$ as follows. For each $\mathfrak{u}\in X$, let $\t$ be the $\alpha$-tabloid, for some composition $\alpha$ of $m$, whose $i$th row contains an orbit of $A$ if and only if its corresponding orbit in $C$ belongs to the $i$th row of $\mathfrak{u}$. Similarly, we obtain an $\beta$-tabloid $\s$. Since $\alpha+p^s\beta=\lambda+p^s\mu$, by Lemma \ref{L: ps partition}, we have $\alpha=\lambda$ and $\beta=\mu$. Therefore $f(\mathfrak{u})=(\t,\s)\in Y$. Obviously, $f,g$ are inverses of each other. The proof is now complete using Lemma \ref{L: dim of M(P)}.
\end{proof}

\begin{lem}\label{L: red 1} Let $\O\in\P^{(p)}(m)$, $\O'\in\P^{(p)}(n)$ and $s\in\NN$ such that $p^s>m$. If $y_{\tau,\O\bullet p^s\O'}\neq 0$ then $\tau=\nu+p^s\delta$ for some $\nu\in\P(m)$ and $\delta\in\P(n)$.
\end{lem}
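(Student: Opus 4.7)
The plan is to translate the hypothesis into the combinatorial condition provided by Theorem \ref{T:eolambda}. Let $\tau = \sum_{i=0}^{r} p^i \tau(i)$ be the $p$-adic expansion and set $b_i := |\tau(i)|$, so $\O_\tau = (1^{b_0}, p^{b_1}, \ldots, (p^r)^{b_r})$. Since $y_{\tau,\O \bullet p^s\O'} \neq 0$, Theorem \ref{T:eolambda} gives that $\O \bullet p^s \O'$ is a rearrangement of a refinement of $\O_\tau$. Concretely, we may partition the parts of $\O \bullet p^s \O'$ into bundles, one for each part of $\O_\tau$, with each bundle summing to the corresponding part $p^i$.

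The key step will be a ``no mixing'' argument exploiting $p^s > m$. All parts coming from $\O$ are powers of $p$ less than $p^s$ (since $|\O|=m<p^s$), while all parts coming from $p^s\O'$ are powers of $p$ at least $p^s$. Consider a bundle summing to $p^i$ with $i \geq s$: let $S_1$ be the sum of its contributions from $\O$ and $S_2$ the sum of its contributions from $p^s\O'$. Then $S_2$ is a sum of powers of $p$ each divisible by $p^s$, so $p^s \mid S_2$; together with $p^s \mid p^i$ this forces $p^s \mid S_1$. But $0 \leq S_1 \leq m < p^s$, so $S_1 = 0$. Conversely, any bundle summing to $p^i$ with $i < s$ automatically uses only parts $< p^s$ (hence from $\O$). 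Therefore the parts of $\O$ exactly refine the first $\sum_{i<s} b_i$ entries of $\O_\tau$ (those of size $<p^s$) and the parts of $p^s\O'$ exactly refine the remaining entries.

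Comparing total sizes yields $m = |\O| = \sum_{i=0}^{s-1} p^i b_i$ and $n = |\O'| = \sum_{i=s}^{r} p^{i-s} b_i$. Setting
\[
\nu := \sum_{i=0}^{s-1} p^i \tau(i), \qquad \delta := \sum_{i=s}^{r} p^{i-s} \tau(i),
\]
we get $|\nu| = m$ and $|\delta| = n$, while $\tau = \nu + p^s \delta$ is immediate from the $p$-adic expansion. Both $\nu$ and $\delta$ are partitions because componentwise sums of weakly decreasing sequences are weakly decreasing. This completes the argument.

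The main content is the middle ``no mixing'' observation, which is where the hypothesis $p^s > m$ is used essentially; without it one could refine a part $p^i$ of $\O_\tau$ with $i \geq s$ partly from $\O$ and partly from $p^s\O'$, and the clean split producing $\nu$ and $\delta$ would fail.
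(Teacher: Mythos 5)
Your proof is correct and follows essentially the same route as the paper's: invoke Theorem \ref{T:eolambda} to get that $\O\bullet p^s\O'$ is a rearrangement of a refinement of $\O_\tau$, define $\nu:=\sum_{i<s}p^i\tau(i)$ and $\delta:=\sum_{i\ge s}p^{i-s}\tau(i)$ from the $p$-adic expansion, and use $p^s>m$ to pin down $|\nu|=m$, $|\delta|=n$. The only difference is in the middle step: the paper observes just the single inequality $|\delta|\ge n$ (parts of $p^s\O'$, all of size $\ge p^s$, must sit inside parts of $\O_\tau$ of size $\ge p^s$) and then closes with the size identity $m+p^sn=|\nu|+p^s|\delta|$ together with $0\le m-|\nu|<p^s$; you prove the stronger full ``no-mixing'' statement via a divisibility-by-$p^s$ argument and read both equalities off from it directly. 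Either variant is fine; the paper's is slightly more economical, yours makes the clean split more visible.
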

\begin{proof} Let $\sum^r_{i=0}p^{i}\tau(i)$ be the $p$-adic expansion of $\tau$. By Theorem \ref{T:eolambda}, $\O'':=\O\bullet p^s\O'$ is a rearrangement of a refinement of $\O_\tau=(1^{|\tau(0)|},\ldots,(p^r)^{|\tau(r)|})$. Let $\nu:=\sum_{i=0}^{s-1}p^{i}\tau(i)$ and $\delta:=\sum_{i=s}^rp^{i-s}\tau(i)$. Notice that both $\nu,\delta$ are partitions. We now show that $|\nu|=m$ and $|\delta|=n$. Since $\O''$ is a rearrangement of a refinement of $\O_\tau$, we have $|\delta|\geq n$. On the other hand, we have $m+p^sn=|\nu|+p^s|\delta|$ and hence $0\leq p^s(|\delta|-n)=m-|\nu|<p^s$. Therefore it forces that $|\delta|=n$ and $|\nu|=m$.
\end{proof}

We are now ready to prove our first reductive formula about orbit numbers.

\begin{thm}\label{T: reduction 3}
Let $\lambda\in \P(m)$, $\mu\in \P(n)$, $\O\in \P^{(p)}(m)$, $\O'\in \P^{(p)}(n)$ and $s,t\in\NN$ such that $p^t\geq p^{s}>m$. Then \[y_{\lambda+p^s\mu,\O\bullet p^{s}\O'}=y_{\lambda+p^t\mu,\O\bullet p^{t}\O'}.\]
\end{thm}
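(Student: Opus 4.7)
The plan is to exploit the relation $\M=\Kostka\Y$ encoded in (\ref{Eq: 1}) and induct on $(\lambda,\mu) \in \P(m) \times \P(n)$ under componentwise dominance.

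By Lemma \ref{L: Reductive2},
\begin{align*}
m_{\lambda+p^s\mu,\,\O\bullet p^s\O'} \;=\; m_{\lambda,\O}\, m_{\mu,\O'} \;=\; m_{\lambda+p^t\mu,\,\O\bullet p^t\O'}.
\end{align*}
Expanding both ends using (\ref{Eq: 1}) reduces the task to matching the two resulting $y$-sums term by term under the pairing $\nu + p^s\delta \leftrightarrow \nu + p^t\delta$ for $(\nu,\delta) \in \P(m) \times \P(n)$.

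Three ingredients make this pairing work. Firstly, by Lemma \ref{L: red 1}, only partitions of the form $\nu + p^s\delta$ (respectively $\nu + p^t\delta$), with $\nu\in\P(m)$ and $\delta\in\P(n)$, contribute nonzero $y$-values. Secondly, Theorem \ref{T: Reductive}(i) applies to $k_{\lambda+p^s\mu,\,\nu+p^s\delta}$: the hypotheses $p^s > \lambda_1$ and $s$ strictly greater than the top index of the $p$-adic expansion of $\nu$ both follow from $p^s > m \geq \lambda_1,\nu_1$, yielding
\begin{align*}
k_{\lambda+p^s\mu,\,\nu+p^s\delta} \;=\; k_{\lambda,\nu}\, k_{\mu,\delta} \;=\; k_{\lambda+p^t\mu,\,\nu+p^t\delta},
\end{align*}
so the $p$-Kostka coefficients factorise identically for $s$ and $t$. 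Thirdly, the dominance condition $\nu + p^s\delta \unrhd \lambda + p^s\mu$ is itself $s$-independent once $p^s > m$: writing $A_k, B_k, C_k, D_k$ for the $k$-th partial sums of $\nu,\delta,\lambda,\mu$, the required inequality $p^s(B_k - D_k) \geq C_k - A_k$ is dictated by the sign of $B_k - D_k$ alone, since $|C_k - A_k| \leq m < p^s$; it thus unfolds to the $s$-free statement that for every $k$, either $B_k > D_k$, or $B_k = D_k$ and $A_k \geq C_k$. Combined with the requirement $k_{\lambda,\nu} k_{\mu,\delta} \neq 0$, which forces $\nu \unrhd \lambda$ and $\delta \unrhd \mu$, the effective index set of either sum is the common set $\{(\nu,\delta) \neq (\lambda,\mu) : \nu \unrhd \lambda,\, \delta \unrhd \mu\}$.

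Having set this up, the argument concludes by induction on $(\lambda,\mu)$ with respect to componentwise dominance on $\P(m)\times\P(n)$. The base case $(\lambda,\mu)=((m),(n))$ is immediate: the sum is empty and both $y$-values equal $m_{(m),\O}m_{(n),\O'}$. In the inductive step, every $(\nu,\delta)$ appearing in the sum is strictly greater than $(\lambda,\mu)$ componentwise, so the induction hypothesis pairs the two sums term by term, forcing $y_{\lambda+p^s\mu,\,\O\bullet p^s\O'} = y_{\lambda+p^t\mu,\,\O\bullet p^t\O'}$. The main obstacle is securing the $s$-independence of the dominance-order condition via the partial-sum comparison above; the remainder is routine bookkeeping with (\ref{Eq: 1}).
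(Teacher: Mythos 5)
Your proof is correct and follows essentially the same route as the paper: the identity $m_{\lambda+p^s\mu,\O\bullet p^s\O'}=m_{\lambda,\O}m_{\mu,\O'}$ from Lemma \ref{L: Reductive2}, the restriction of the support via Lemma \ref{L: red 1}, the factorisation of $p$-Kostka numbers via Theorem \ref{T: Reductive}(i), and an induction with respect to dominance. Your explicit partial-sum check that the dominance condition is $s$-independent, and your verification of the hypotheses of Gill's theorem, are points the paper leaves implicit (there the index set is pinned down purely by the vanishing $k_{\lambda,\nu}k_{\mu,\delta}=0$ unless $\nu\unrhd\lambda$, $\delta\unrhd\mu$), but the argument is the same.
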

\begin{proof}
Let $\O'':=\O\bullet p^s\O'$ and $\O''':=\O\bullet p^t\O'$. We show our statement by using induction on the set \[X=\{\nu+p^s\delta:\nu\in\P(m),\ \delta\in\P(n)\}\] with respect to the dominance order. When $\nu=(m)$ and $\delta=(n)$, both $Y^{(m+p^sn)}$ and $Y^{(m+p^tn)}$ are trivial modules. So, by Lemma \ref{L: dim of M(P)}, \[y_{(m+p^sn),\O''}=m_{(m+p^sn),\O''}=1=m_{(m+p^tn),\O'''}=y_{(m+p^tn),\O'''}.\]

Suppose that $y_{\nu+p^s\delta,\O''}=y_{\nu+p^t\delta,\O'''}$ for all partitions $\nu+p^s\delta\rhd\lambda+p^s\mu$ where $\nu\in\P(m)$ and $\delta\in\P(n)$. By Equation \ref{Eq: 1}, we have \[m_{\lambda+p^s\mu,\O''}=y_{\lambda+p^s\mu,\O''}+\sum_{\tau\rhd \lambda+p^s\mu}k_{\lambda+p^s\mu,\tau}y_{\tau,\O''}.\] By Lemma \ref{L: red 1}, $y_{\tau,\O''}=0$ unless $\tau=\nu+p^s\delta$ for some uniquely determined $\nu\in\P(m)$ and $\delta\in\P(n)$ (see Lemma \ref{L: ps partition}). By Theorem \ref{T: Reductive}(i), $k_{\lambda+p^s\mu,\nu+p^s\delta}=k_{\lambda,\nu}k_{\mu,\delta}$. Using the observation $k_{\lambda,\nu}k_{\mu,\delta}=0$ unless $\nu\trianglerighteq\lambda$ and $\delta\trianglerighteq\mu$, we deduce that \[m_{\lambda+p^s\mu,\O''}=y_{\lambda+p^s\mu,\O''}+\sum_{\nu\rhd \lambda,\delta\rhd \mu}k_{\lambda,\nu}k_{\mu,\delta}y_{\nu+p^s\delta,\O''}.\] Similarly, we obtain \[m_{\lambda+p^t\mu,\O'''}=y_{\lambda+p^t\mu,\O'''}+\sum_{\nu\rhd \lambda,\delta\rhd \mu}k_{\lambda,\nu}k_{\mu,\delta}y_{\nu+p^t\delta,\O'''}.\] Our result now follows using Lemma \ref{L: Reductive2} and inductive hypothesis.
\end{proof}

We obtain the following immediate consequence.

\begin{cor} Let $t\in\NN$, $\mu\in\P(n)$ and $\O\in\P^{(p)}(n)$. Then $y_{p^t\mu,p^t\O}=y_{\mu,\O}$.
\end{cor}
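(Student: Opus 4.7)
The plan is to obtain this corollary as a direct specialisation of Theorem \ref{T: reduction 3}. The idea is to apply that theorem in the degenerate boundary case $m = 0$ and $s = 0$, where the ``$\lambda$''-part disappears and the ``$\mu$''-part carries the full data $(\mu,\O)$ of the corollary.

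Concretely, I would take $m = 0$ in Theorem \ref{T: reduction 3}, so that $\lambda = \varnothing$ is the unique partition of $0$ and $\O$ (in the theorem's notation) is the unique element $\varnothing$ of $\P^{(p)}(0)$; then I would take $s = 0$, and let $\O'$ in the theorem play the role of $\O$ in the corollary. The hypothesis $p^{t} \geq p^{s} > m$ becomes $p^{t} \geq 1 > 0$, which holds for every $t \in \NN$. Under these choices, we have
\[
\lambda + p^{s}\mu = \mu, \qquad \varnothing \bullet p^{s}\O = \O,
\]
and
\[
\lambda + p^{t}\mu = p^{t}\mu, \qquad \varnothing \bullet p^{t}\O = p^{t}\O,
\]
so Theorem \ref{T: reduction 3} delivers the identity $y_{\mu,\O} = y_{p^{t}\mu,\, p^{t}\O}$, which is exactly the claim.

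There is essentially no obstacle beyond confirming that the boundary case $m = s = 0$ is genuinely admissible in Theorem \ref{T: reduction 3}; its only numerical constraint $p^{s} > m$ reduces to $1 > 0$, so the specialisation is valid. In short, the corollary is a one-line consequence of Theorem \ref{T: reduction 3}, and I would simply exhibit the substitution above.
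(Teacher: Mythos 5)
Your specialisation $m=0$, $\lambda=\varnothing$, $s=0$ of Theorem \ref{T: reduction 3} is exactly the intended derivation (the paper states the corollary as an immediate consequence of that theorem, with no further argument), and the degenerate case is indeed admissible since $p^{t}\geq p^{0}=1>0=m$. Your proof is correct and matches the paper's approach.
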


Next we prove another reductive formula which depends on the shape of the partition (see Theorem \ref{T: Reductive5}) instead of on the size of the partition as in Theorem \ref{T: reduction 3}. The main idea of these two proofs are quite similar but the latter requires slightly different treatment. We need the following two lemmas.

\begin{lem}\label{L: red 2} Let $\lambda\in\P(m)$, $\O\in\P^{(p)}(m)$, $\O'\in\P^{(p)}(n)$ and $s\in\NN$ such that $p^s>\lambda_2$. Then \[m_{\lambda+(p^sn),\O''}=m_{\lambda,\O},\] where $\O''\in\P^{(p)}(m+p^sn)$ is the rearrangement of $\O\bullet p^s\O'$.
\end{lem}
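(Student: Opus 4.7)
The plan is to use Lemma \ref{L: dim of M(P)} to turn both sides into counts of tabloids and then build an explicit bijection driven by the size gap between the ``small'' orbits coming from $\O$ and the ``large'' orbits coming from $p^s\O'$. Concretely, I would pick $p$-subgroups $P\subseteq\sym{m}$ and $Q\subseteq\sym{n}$ with orbit types $\O$ and $\O'$ respectively, let $R\subseteq\sym{m+p^sn}$ be a $p$-subgroup acting on $\set{m+p^sn}$ whose orbit set is identified with $\orbit{\set{m}}{P}$ together with a $p^s$-fold blow-up of $\orbit{\set{n}}{Q}$ (each orbit of size $p^i$ in $\orbit{\set{n}}{Q}$ giving an orbit of size $p^{s+i}$). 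This subgroup realises the type $\O''$.

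The key observation is that every orbit in the blow-up has size at least $p^s$, while rows $2,3,\ldots$ of $\lambda+(p^sn)$ have sizes $\lambda_2,\lambda_3,\ldots\leq\lambda_2<p^s$. Hence, for any $(\lambda+(p^sn))$-tabloid $\t\in M_{\lambda+(p^sn),R}$, every blow-up orbit must lie in row $1$; and since the total size of the blow-up orbits is exactly $p^sn$, they occupy a uniquely determined subset of row $1$ (namely, the last $p^sn$ cells, say), leaving $\lambda_1$ cells there to be filled by orbits from $\orbit{\set{m}}{P}$.

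This gives a map $f:M_{\lambda+(p^sn),R}\to M_{\lambda,P}$ sending $\t$ to the $\lambda$-tabloid obtained by deleting all blow-up orbits from its first row; the inverse $g$ takes a $\lambda$-tabloid $\s\in M_{\lambda,P}$ and inserts all blow-up orbits into row $1$. Checking that both $f$ and $g$ are well-defined and mutually inverse is immediate from the forced placement of the large orbits. Then Lemma \ref{L: dim of M(P)} yields
\[
m_{\lambda+(p^sn),\O''}=|M_{\lambda+(p^sn),R}|=|M_{\lambda,P}|=m_{\lambda,\O}.
\]

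The argument has no real obstacle beyond bookkeeping; the only point that requires care is verifying that the rearrangement from $\O\bullet p^s\O'$ to the canonically ordered $\O''$ does not affect the count (it does not, since $m_{\lambda,-}$ depends only on the multiset of orbit sizes). This is the two-part-partition analogue of Lemma \ref{L: Reductive2}, where the hypothesis $p^s>m$ used to force a disjoint decomposition of orbits across \emph{all} rows is replaced by the weaker hypothesis $p^s>\lambda_2$, which only suffices to isolate the large orbits in row $1$ — which is exactly why the statement reduces to $m_{\lambda,\O}$ rather than to a product of two factors.
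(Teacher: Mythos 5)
Your argument is correct and is essentially the paper's own proof: both pick a $p$-subgroup of $\sym{m+p^sn}$ realising $\O''$, observe that the hypothesis $p^s>\lambda_2$ forces every orbit of size at least $p^s$ into the first row of any tabloid in $M_{\lambda+(p^sn),\,\cdot\,}$, and deduce the obvious bijection with $M_{\lambda,P}$ before applying Lemma \ref{L: dim of M(P)}. Your write-up merely makes the bijection and the well-definedness checks explicit where the paper calls them ``obvious.''
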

\begin{proof} Let $\O''$ be the rearrangement of $\O\bullet p^s\O'\in \P^{(p)}(m+p^{s}n)$. Let $P,Q$ be $p$-subgroups of $\sym{m},\sym{m+p^sn}$ such that $\orbit{\set{m}}{P},\orbit{\set{m+p^sn}}{Q}$ have types $\O,\O''$ respectively. Let $\t\in M_{\lambda+(p^sn),Q}$. Since $p^s>\lambda_2$, the orbits in $[m+p^sn]/Q$ with sizes larger than or equal to $p^s$ must be assigned to the first row of $\t$. Therefore there is a obvious bijection between $M_{\lambda+(p^sn),Q}$ and $M_{\lambda,P}$. Our claim now follows by using Lemma \ref{L: dim of M(P)}.
\end{proof}

\begin{lem}\label{L: red 3} Let $\lambda\in\P(m)$, $\O\in\P^{(p)}(m)$, $\O'\in\P^{(p)}(n)$ and $s\in\NN$ such that $p^s>m-\lambda_1$. If  $\tau\in \P(m+p^sn)$ such that $\tau\rhd \lambda+(p^sn)$ and $y_{\tau,\O\bullet p^s\O'}\neq 0$ then $\tau=\nu+(p^sn)$ for some uniquely determined partition $\nu\in\P(m)$.
\end{lem}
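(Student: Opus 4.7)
The plan is to define $\nu := (\tau_1 - p^s n, \tau_2, \tau_3, \ldots)$ and prove that this is the unique partition of $m$ satisfying $\tau = \nu + (p^s n)$. Since $\tau$ is already a partition, $\nu$ is automatically weakly decreasing from its second entry onward, and $|\nu| = |\tau| - p^s n = m$. Uniqueness of $\nu$ is then immediate from matching parts, so the whole task collapses to the single inequality
\[
\tau_1 - p^s n \geq \tau_2.
\]

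First I would extract information from the dominance hypothesis. The relation $\tau \rhd \lambda + (p^s n)$ gives $\tau_1 \geq \lambda_1 + p^s n$, whence
\[
\tau_2 + \tau_3 + \cdots = (m + p^s n) - \tau_1 \leq m - \lambda_1 < p^s,
\]
so in particular $\tau_2 < p^s$. Writing the $p$-adic expansion $\tau = \sum_{i=0}^r p^i \tau(i)$, the identity $\tau_2 = \sum_i p^i \tau(i)_2$ is literally a $p$-adic representation of $\tau_2$, since each digit $\tau(i)_2$ lies in $\{0,\ldots,p-1\}$ by $p$-restrictedness. Hence $\tau(i)_2 = 0$ for every $i \geq s$, and because $\tau(i)$ is a partition, this forces $\tau(i)$ to have at most one part when $i \geq s$, giving $|\tau(i)| = \tau(i)_1$ there.

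Next I would exploit the nonvanishing hypothesis. By Theorem \ref{T:eolambda}, $y_{\tau,\O\bullet p^s\O'}\neq 0$ says that $\O\bullet p^s\O'$ is a rearrangement of a refinement $\delta = \delta^{(1)}\bullet\cdots\bullet\delta^{(\ell)}$ of $\O_\tau = (1^{|\tau(0)|}, p^{|\tau(1)|}, \ldots, (p^r)^{|\tau(r)|})$. Since every part of $\delta$ is a power of $p$, each $\delta^{(j)}$ refining a part $p^{k_j}$ of $\O_\tau$ consists of $p$-powers summing to $p^{k_j}$; in particular, when $k_j < s$ all its parts are strictly smaller than $p^s$. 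Consequently every part of $\delta$ of size $\geq p^s$ comes from refining a part of $\O_\tau$ of size $\geq p^s$, so the total mass of parts of $\delta$ of size $\geq p^s$ is at most $\sum_{i \geq s} p^i |\tau(i)|$; on the other hand, this mass is at least the contribution $p^s n$ from $p^s\O'$. This yields
\[
\sum_{i\geq s} p^i |\tau(i)| \geq p^s n,
\]
which by the previous paragraph is the same as $\sum_{i\geq s} p^i \tau(i)_1 \geq p^s n$.

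Putting everything together, using $\tau(i)_1 \geq \tau(i)_2$ for every $i$ and $\tau(i)_2 = 0$ for $i \geq s$,
\[
\tau_1 \;=\; \sum_{i=0}^{r} p^i \tau(i)_1 \;\geq\; \sum_{i<s} p^i \tau(i)_1 + p^s n \;\geq\; \sum_{i<s} p^i \tau(i)_2 + p^s n \;=\; \tau_2 + p^s n,
\]
which is the required inequality. The main obstacle I anticipate is the third paragraph: formalising cleanly that $p$-power refinements of a $p$-power below $p^s$ consist entirely of $p$-powers below $p^s$, and bookkeeping which parts of $\delta$ land in the $\O$-block versus the $p^s\O'$-block of $\O\bullet p^s\O'$. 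Once that mass inequality is in hand, the rest reduces to routine manipulation of the $p$-adic expansion.
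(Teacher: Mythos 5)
Your proof is correct and follows essentially the same route as the paper's: the paper writes $\tau=\alpha+p^s\beta$ with $\alpha=\sum_{i<s}p^i\tau(i)$, $\beta=\sum_{i\ge s}p^{i-s}\tau(i)$, gets $|\beta|\ge n$ by the same refinement--mass argument you use, and shows $\beta$ has a single part from the same bound $\tau_2+\tau_3+\cdots\le m-\lambda_1<p^s$. One small repair is needed: your justification that each $\tau(i)_2$ lies in $\{0,\dots,p-1\}$ is false --- $p$-restrictedness only bounds the last part and successive differences, so e.g.\ $(2p-1,p,1)$ is $p$-restricted with second part $p$ --- but the conclusion $\tau(i)_2=0$ for $i\ge s$ survives, since all summands in $\tau_2=\sum_i p^i\tau(i)_2$ are nonnegative and a single nonzero term with $i\ge s$ would already force $\tau_2\ge p^s$.
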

\begin{proof} Let $\sum^r_{i=0}p^i\tau(i)$ be the $p$-adic expansion of $\tau$ and let $\alpha=\sum^{s-1}_{i=0}p^i\tau(i)$ and $\beta=\sum^r_{i=s}p^{i-s}\tau(i)$ such that $\tau=\alpha+p^s\beta$. We claim that $\beta=(b)$ for some $b\geq n$. Since $y_{\tau,\O\bullet p^s\O'}\neq 0$, we have $\O\bullet p^s\O'$ is a rearrangement of a refinement of $\O_\tau$ and hence $|\beta|\geq |\O'|=n$. Suppose that $\beta_i>0$ for some $i\geq 2$. Since $\tau\rhd \lambda+(p^sn)$, we have $\tau_1\geq \lambda_1+p^sn$. Also, $\tau_i=\alpha_i+p^s\beta_i\geq \alpha_i+p^s$. Therefore \[p^s\leq\alpha_i+p^s\leq \tau_i\leq (m+p^sn)-\tau_1\leq m-\lambda_1<p^s,\] which is a contradiction. This shows that $\beta$ has at most one part and hence $\beta=(b)$ for some $b\geq n$. Let $\nu=\alpha+p^s(b-n)$. Then $\tau=\nu+(p^sn)$.
\end{proof}

We are now ready to prove the second reductive formula about orbit numbers.

\begin{thm}\label{T: Reductive5}
Let $\lambda\in \P(m)$ such that $m-\lambda_{1}<p^s$ for some $s\in\NN$, let $\O\in\P^{(p)}(m)$ and let $\O'\in \P^{(p)}(n)$. Then \[y_{\lambda+(p^sn),\O''}=y_{\lambda,\O},\] where $\O''\in\P^{(p)}(m+p^sn)$ is the rearrangement of $\O\bullet p^s\O'$.
\end{thm}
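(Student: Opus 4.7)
The plan is to mimic the induction used in the proof of Theorem \ref{T: reduction 3}, substituting the ingredients Lemmas \ref{L: Reductive2} and \ref{L: red 1} and Theorem \ref{T: Reductive}(i) by their analogues Lemmas \ref{L: red 2} and \ref{L: red 3} and Theorem \ref{T: Reductive}(ii) respectively. I would induct on $\lambda\in\P(m)$ with respect to the dominance order. For the base case $\lambda=(m)$, both $Y^{(m)}$ and $Y^{(m+p^sn)}$ are the trivial modules, so Lemma \ref{L: dim of M(P)} yields $y_{(m+p^sn),\O''}=m_{(m+p^sn),\O''}=1=m_{(m),\O}=y_{(m),\O}$. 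A structural observation that lets the induction close is that the shape hypothesis propagates: if $\nu\in\P(m)$ and $\nu\rhd\lambda$ then $\nu_1\geq\lambda_1$, hence $m-\nu_1\leq m-\lambda_1<p^s$.

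For the inductive step, I would apply Equation \ref{Eq: 1} to the pair $(\lambda+(p^sn),\O'')$:
\[m_{\lambda+(p^sn),\O''}=y_{\lambda+(p^sn),\O''}+\sum_{\tau\rhd\lambda+(p^sn)}k_{\lambda+(p^sn),\tau}\,y_{\tau,\O''}.\]
Lemma \ref{L: red 3} (whose hypothesis $p^s>m-\lambda_1$ is exactly our assumption), together with the uniqueness of the decomposition afforded by Lemma \ref{L: ps partition}, forces any $\tau$ contributing a nonzero term to be of the form $\nu+(p^sn)$ for a unique $\nu\in\P(m)$, and the dominance relation $\nu+(p^sn)\rhd\lambda+(p^sn)$ is equivalent to $\nu\rhd\lambda$ since both partitions acquire the same addend $p^sn$ in the first row and so their partial sums differ by $p^sn$ at every index. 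Since $\lambda_2\leq m-\lambda_1<p^s$, Theorem \ref{T: Reductive}(ii) gives $k_{\lambda+(p^sn),\nu+(p^sn)}=k_{\lambda,\nu}$, so the sum collapses to $\sum_{\nu\rhd\lambda,\,\nu\in\P(m)}k_{\lambda,\nu}\,y_{\nu+(p^sn),\O''}$. Expanding $m_{\lambda,\O}$ analogously via Equation \ref{Eq: 1}, invoking Lemma \ref{L: red 2} to equate the two left-hand sides $m_{\lambda+(p^sn),\O''}=m_{\lambda,\O}$, and applying the inductive hypothesis termwise to the right-hand sides, I conclude $y_{\lambda+(p^sn),\O''}=y_{\lambda,\O}$.

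The principal obstacle is purely bookkeeping: verifying that the shape hypothesis $m-\lambda_1<p^s$ propagates along the dominance order (so that Theorem \ref{T: Reductive}(ii) and Lemma \ref{L: red 2} remain applicable to every $\nu\rhd\lambda$ appearing in the inductive hypothesis), and that the two sums indexed over $\tau$ and $\nu$ respectively have matching index sets after the reindexation $\tau=\nu+(p^sn)$. Once these identifications are secured, the conclusion follows from the matrix identity $\M=\Kostka\Y$ restricted to the relevant sub-indexing precisely as in the proof of Theorem \ref{T: reduction 3}.
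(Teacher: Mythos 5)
Your proposal is correct and follows essentially the same route as the paper's proof: the same induction along the dominance order (the paper phrases it as induction on the set $\{\nu\in\P(m):m-\nu_{1}<p^{s}\}$, which your observation that the shape hypothesis propagates makes equivalent), with Equation \ref{Eq: 1}, Lemma \ref{L: red 3}, Theorem \ref{T: Reductive}(ii) and Lemma \ref{L: red 2} playing exactly the roles you assign them.
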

\begin{proof} We argue by using induction with respect to the dominance order on the set \[X=\{\nu\in\P(m):m-\nu_1<p^s\}.\] W hen $\nu=(m)\in X$, we have $y_{(m)+(p^sn),\O''}=1=y_{(m),\O}$. Suppose that $y_{\nu+(p^sn),\O''}=y_{\nu,\O}$ for all $\lambda\lhd\nu\in X$. By Equation \ref{Eq: 1} and Lemma \ref{L: red 3}, we have
\begin{align*}
m_{\lambda+(p^sn),\O''}&=y_{\lambda+(p^sn),\O''}+\sum_{\tau\rhd \lambda+(p^sn)}k_{\lambda+(p^sn),\tau}y_{\tau,\O''}\\
&=y_{\lambda+(p^sn),\O''}+\sum_{\nu\rhd \lambda}k_{\lambda+(p^sn),\nu+(p^sn)}y_{\nu+(p^sn),\O''}.
\end{align*} By Theorem \ref{T: Reductive}(ii), since $\lambda_2\leq m-\lambda_1<p^s$, we have $k_{\lambda+(p^sn),\nu+(p^sn)}=k_{\lambda,\nu}$. By inductive hypothesis, we have \[m_{\lambda+(p^sn),\O''}=y_{\lambda+(p^sn),\O''}+\sum_{\nu\rhd \lambda}k_{\lambda,\nu}y_{\nu,\O}.\] The proof is now complete by using Equation \ref{Eq: 1} $m_{\lambda,\O}=y_{\lambda,\O}+\sum_{\nu\rhd \lambda}k_{\lambda,\nu}y_{\nu,\O}$ and Lemma \ref{L: red 2}.
\end{proof}

\section{Two-part partitions}\label{S: two part}
In the final section, we provide some explicit calculation about the orbit numbers $y_{\lambda,\O}$ when $\lambda$ is a two-part partition. We begin with the following proposition.

\begin{prop} Let $\O=(1^{a_{0}}, p^{a_{1}},\ldots, (p^{r})^{a_{r}})\in\P^{(p)}(n)$. If $a_0=0$, then $y_{(n-1,1),\O}=0$. If $a_0\neq 0$ then
\[y_{(n-1,1),\O}=\begin{cases}
             a_0, &\text{ if $p\mid n$,}\\
              a_0-1, &\text{ if $p\nmid n$}.

\end{cases}\]
\end{prop}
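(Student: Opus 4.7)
The plan is to apply the defining equation \eqref{Eq: 1} for $\lambda=(n-1,1)$. Since $(n)$ is the unique partition of $n$ strictly dominating $(n-1,1)$, that equation reduces to
\[
y_{(n-1,1),\O}=m_{(n-1,1),\O}-k_{(n-1,1),(n)}\,y_{(n),\O},
\]
so the task splits into computing the three quantities on the right-hand side.

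The first quantity $m_{(n-1,1),\O}$ is easy: by Lemma \ref{L: dim of M(P)} it counts the number of ways to distribute the orbits of a $p$-subgroup of type $\O$ into the two rows of the Young diagram $[(n-1,1)]$. Since the row of length $1$ can only accommodate a single orbit of size~$1$, we obtain $m_{(n-1,1),\O}=a_0$. For $y_{(n),\O}$, since $Y^{(n)}=M^{(n)}$ is the trivial module $\F$ and each relative trace $\Tr^P_Q$ on $\F$ is multiplication by $[P:Q]\equiv0\pmod p$ for every proper $Q<P$, we get $Y^{(n)}(P)\cong\F$ for every $p$-subgroup $P$; hence $y_{(n),\O}=1$.

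The third ingredient is the $p$-Kostka number $k_{(n-1,1),(n)}$, and the plan here is to use the short exact sequence
\[
0\longrightarrow S^{(n-1,1)}\longrightarrow M^{(n-1,1)}\xrightarrow{\ \mathrm{aug}\ }\F\longrightarrow 0
\]
induced by the augmentation map. Let $v:=\sum_i e_i$ be the sum of all $(n-1,1)$-tabloids. When $p\nmid n$, the image of $v$ in $\F$ is $n\ne 0$, so $\F v$ is a $\sym{n}$-stable complement of $S^{(n-1,1)}$; this gives $M^{(n-1,1)}\cong Y^{(n)}\oplus Y^{(n-1,1)}$ and hence $k_{(n-1,1),(n)}=1$. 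When $p\mid n$, we have $v\in S^{(n-1,1)}$, and one argues that the unique trivial subspace of $M^{(n-1,1)}$ is $\F v\subseteq S^{(n-1,1)}$, so no trivial summand can be split off; thus $Y^{(n-1,1)}=M^{(n-1,1)}$ and $k_{(n-1,1),(n)}=0$.

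Substituting these three inputs into the displayed equation yields $y_{(n-1,1),\O}=a_0-1$ if $p\nmid n$ and $y_{(n-1,1),\O}=a_0$ if $p\mid n$. To finish the statement of the proposition, note that if $a_0=0$ then every orbit of $P$ on $[n]$ has size a positive power of $p$, forcing $p\mid n$; in that case the formula gives $y_{(n-1,1),\O}=0$, consistent with Theorem \ref{T:eolambda} (the type $\O_{(n-1,1)}$ contains at least one part equal to~$1$ coming from the row of length~$1$, so an $\O$ with no size-one orbits cannot refine $\O_{(n-1,1)}$). The only genuinely delicate step is establishing $k_{(n-1,1),(n)}=0$ when $p\mid n$, i.e.\ the indecomposability of $M^{(n-1,1)}$ in this case; this is standard but is the one place where one must argue slightly beyond bookkeeping.
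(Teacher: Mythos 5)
Your proposal is correct and follows essentially the same route as the paper: compute $m_{(n-1,1),\O}=a_0$ from Lemma \ref{L: dim of M(P)}, use the decomposition of $M^{(n-1,1)}$ into Young modules, and subtract via Equation \eqref{Eq: 1}. The only difference is that you prove the decomposition $M^{(n-1,1)}\cong Y^{(n-1,1)}$ (for $p\mid n$) or $Y^{(n)}\oplus Y^{(n-1,1)}$ (for $p\nmid n$) via the augmentation sequence, whereas the paper simply cites it as well known; your handling of the $a_0=0$ case (observing it forces $p\mid n$) also differs harmlessly from the paper's direct bound $0\le y_{(n-1,1),\O}\le m_{(n-1,1),\O}=0$.
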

\begin{proof} Let $P$ be a $p$-subgroup such that $\orbit{\set{n}}{P}$ has type $\O$. It is well-known that $M^{(n-1,1)}$ is isomorphic to $Y^{(n-1,1)}$ if $p$ divides $n$ and $Y^{(n)}\oplus Y^{(n-1,1)}$ otherwise. By Lemma \ref{L: dim of M(P)}, $\dim_\F M^{(n-1,1)}(P)$ is the number of ways to insert the orbits with size one that are in $\set{n}/P$ into the second row of $(n-1,1)$, i.e., $\dim_\F M^{(n-1,1)}(P)=a_0$. If $a_0=0$, since $0\leq y_{(n-1,1),\O}\leq \dim_\F M^{(n-1,1)}(P)=0$, then $y_{(n-1,1),\O}=0$. If $a_0\neq 0$, then
\[y_{(n-1,1),\O}= \dim_\F M^{(n-1,1)}(P)-k_{(n),(n-1,1)}\dim_\F Y^{(n)}(P)=a_0-k_{(n),(n-1,1)},\] where $k_{(n),(n-1,1)}$ is 1 if $p\nmid n$ and 0 otherwise.
\end{proof}

Next, we compute $y_{\lambda,\O_\lambda}$ when $\lambda$ is a two-part partition. We need the following two lemmas.


\begin{lem}\label{L:pweight}
Any $p$-restricted two-part partition has $p$-weight either 0 or 1. Furthermore, a $p$-restricted partition $(a,b)$ has $p$-weight 1 if and only if $a-b< p-1$ and $a+1\geq p$. In this case, the $p$-core is $(b-1,a+1-p)$.
\end{lem}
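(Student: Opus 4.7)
The plan is to reduce the problem to enumerating the hook lengths of $\lambda=(a,b)$ and invoking the standard fact that a partition has $p$-weight at least $1$ if and only if its Young diagram contains a hook of length exactly $p$, equivalently a removable rim $p$-hook. Once the hook inventory is in hand, both assertions of the lemma reduce to a finite check, and the $p$-core can be read off by explicitly removing the distinguished rim hook.

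First I would list the hook lengths of $(a,b)$ (with $a\geq b\geq 1$): the boxes $(1,j)$ with $1\leq j\leq b$ carry hook lengths $a-j+2$, filling the interval $\{a-b+2,\ldots,a+1\}$; the boxes $(1,j)$ with $b<j\leq a$ fill $\{1,\ldots,a-b\}$; and the boxes $(2,j)$ with $1\leq j\leq b$ fill $\{1,\ldots,b\}$. The $p$-restriction $b<p$ and $a-b<p$ immediately excludes the value $p$ from the latter two families, so a hook of length $p$ can appear only at the unique box $(1,a-p+2)$, and this exists precisely when $a-b+2\leq p\leq a+1$, i.e.\ $a-b<p-1$ and $a+1\geq p$.

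When this condition fails, $\lambda$ is already a $p$-core and its weight is $0$. When it holds, I would remove the corresponding rim $p$-hook, which traces the rim from $(1,a)$ leftward through $(1,b)$, then down into row $2$ and leftward to $(2,a-p+2)$; a direct check shows that the result is the partition $\mu=(b-1,a+1-p)$, which is valid by the same inequalities. Reapplying the hook-length inventory to $\mu$, any hook of length $p$ in $\mu$ would force its first part to satisfy $(b-1)+1\geq p$, i.e.\ $b\geq p$, contradicting the $p$-restriction of $\lambda$. Hence $\mu$ has no hook of length $p$ and is itself a $p$-core, so $\lambda$ has $p$-weight exactly $1$ and $\kappa_p(\lambda)=(b-1,a+1-p)$.

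The main difficulty I anticipate is not conceptual but bookkeeping: tracing the rim $p$-hook correctly so as to pin down $\kappa_p(\lambda)$ precisely (rather than just its size), and confirming that the degenerate boundary cases ($b\in\{0,1\}$, or $a+1-p=0$) admit the same formula. These are routine and need no essential modification of the argument.
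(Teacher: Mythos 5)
Your proof is correct and follows essentially the same route as the paper's: determine exactly when $(a,b)$ admits a removable rim $p$-hook (the paper asserts this is ``clear from the Young diagram'' where you make it precise via the hook-length inventory), remove that hook to obtain $(b-1,a+1-p)$, and conclude this is a $p$-core because its first part plus one equals $b<p$. The extra care you take with the hook-length bookkeeping and the boundary cases only fills in details the paper leaves implicit.
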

\begin{proof}
Let $\lambda=(a,b)$ which is a $p$-restricted two-part partition, i.e., $a-b<p$ and $b<p$. Suppose that the $p$-weight of $\lambda$ is not zero. It is clear from the Young diagram of $\lambda$ that it is equivalent to $a-b<p-1$ and $a+1\geq p$. In this case, after removing one $p$-hook from $\lambda$, the remaining partition is $(b-1,a+1-p)$. However, $(b-1)+1<p$, so $(b-1,a+1-p)$ has $p$-weight 0.
\end{proof}

The weight one blocks of symmetric groups are Morita equivalent by \cite[Theorem 1]{Scopes}.For $p\geq 3$, the decomposition matrix of the principal block of $\F\sym{p}$ has been described by Peel in \cite{Peel} (see also \cite[Theorem 24.1]{GJ1}). Let $b$ be the weight one block of $\F\sym{n}$ labelled by a $p$-core $\nu$ and let \[\mu^{(0)}\rhd\mu^{(1)}\rhd\cdots\rhd\mu^{(p-1)}\] be all the partitions occurring in $b$. Notice that $\mu^{(i)}$ is $p$-restricted for each $i=1,2,\ldots, p-1$ and $\mu^{(0)}=\nu+(p)$. Taking the conjugates, we have $(\mu^{(0)})'\lhd(\mu^{(1)})'\lhd\cdots\lhd(\mu^{(p-1)})'$. By the Brauer reciprocity, if $\mu$ is $p$-restricted, we have that the multiplicity of the ordinary irreducible character $\chi^\lambda$ in $\ch(Y^\mu)$ is the decomposition number $d_{\lambda',\mu'}$, i.e., the multiplicity of the simple module $D^{\mu'}$ in the composition series of $S^{\lambda'}$. In particular, we have \[\dim_\F Y^{\mu^{(i)}}=\dim_\F S^{\mu^{(i)}}+\dim_\F S^{\mu^{(i-1)}}.\]

Suppose that $\lambda=(a,b)$ is a $p$-restricted partition. Suppose first that $p$ is odd. If the $p$-weight of $\lambda$ is zero then $Y^\lambda\cong S^\lambda$. If the $p$-weight of $\lambda$ is one then, by Lemma \ref{L:pweight}, in our discussion above, $\mu^{(1)}=\lambda$ and $\mu^{(0)}=\kappa_p(\lambda)+(p)=(p+b-1,a+1-p)$.  Suppose now that  $p=2$. We have that $(a,b)$ is either $(2,1)$ or $(1,1)$. In this case, $Y^{(2,1)}\cong S^{(2,1)}$ and $M^{(1,1)}\cong Y^{(1,1)}$. We have obtained the following lemma.

\begin{lem}\label{L: dim of two part}
For a $p$-restricted two-part partition $\lambda$, we have
\[\dim_\F{Y^\lambda}=\begin{cases}
\dim_\F{S^\lambda},& \text{ if $\lambda=\kappa_p(\lambda)$,}\\
\dim_\F{S^\lambda}+\dim_\F{S^{\kappa_p(\lambda)+(p)}},& \text{ if $\lambda\neq \kappa_p(\lambda)$.}
\end{cases}
\]
\end{lem}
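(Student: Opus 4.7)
The plan is to case-split on the $p$-weight of $\lambda$. By Lemma \ref{L:pweight}, a $p$-restricted two-part partition has $p$-weight either $0$ or $1$, so only these two cases need handling.

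If $\lambda = \kappa_p(\lambda)$, then the $p$-weight is zero, so $\lambda$ labels a block of defect zero. I would invoke the standard fact that in a defect-zero block the Specht module $S^\lambda$ is simultaneously simple and projective, so $Y^\lambda \cong S^\lambda$ and the first branch of the formula is immediate.

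If $\lambda \neq \kappa_p(\lambda)$, then $\lambda$ has $p$-weight one. For $p$ odd, I would use the Peel description already recalled in the paper: the block containing $\lambda$ has exactly $p$ partitions $\mu^{(0)} \rhd \mu^{(1)} \rhd \cdots \rhd \mu^{(p-1)}$, with $\mu^{(0)} = \kappa_p(\lambda) + (p)$ the unique $p$-singular one and $\mu^{(1)}, \ldots, \mu^{(p-1)}$ all $p$-restricted. I would then identify $\lambda$ with $\mu^{(1)}$: the partition $\mu^{(1)}$ is the most dominant $p$-restricted partition of the block, obtained from $\kappa_p(\lambda)$ by placing the rim $p$-hook across the first two rows rather than entirely on the first; by Lemma \ref{L:pweight}, the resulting two-row shape is precisely $(a,b) = \lambda$. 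The Brauer reciprocity identity $\dim_\F Y^{\mu^{(1)}} = \dim_\F S^{\mu^{(1)}} + \dim_\F S^{\mu^{(0)}}$ recorded just before the lemma then delivers the claimed equality. For $p=2$, Lemma \ref{L:pweight} leaves only $\lambda = (1,1)$ as a $2$-restricted two-part partition of $2$-weight one, while $(2,1)$ has $2$-weight zero and is absorbed into the first case. Since $M^{(1,1)} \cong Y^{(1,1)}$ has dimension $2$, and $\kappa_2((1,1)) = \varnothing$, the identity reads $2 = \dim_\F S^{(1,1)} + \dim_\F S^{(2)} = 1+1$ and is verified by inspection.

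The only real obstacle is the identification $\lambda = \mu^{(1)}$ for odd $p$; apart from that, the proof is a routine assembly of Peel's weight-one description, the Brauer reciprocity formula, and Lemma \ref{L:pweight}, all of which are laid out in the paragraph immediately preceding the lemma statement.
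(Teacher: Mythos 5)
Your proof is correct and takes essentially the same route as the paper, which derives the lemma in the paragraph immediately preceding its statement: weight zero gives a defect-zero block so $Y^\lambda\cong S^\lambda$; weight one for odd $p$ identifies $\lambda$ with $\mu^{(1)}$ in Peel's description of the weight-one block and applies the recorded Brauer reciprocity identity $\dim_\F Y^{\mu^{(1)}}=\dim_\F S^{\mu^{(1)}}+\dim_\F S^{\mu^{(0)}}$ with $\mu^{(0)}=\kappa_p(\lambda)+(p)$; and $p=2$ is checked directly on $(2,1)$ and $(1,1)$. The identification $\lambda=\mu^{(1)}$, which you correctly single out as the only substantive step, is exactly the point the paper also settles via Lemma \ref{L:pweight}.
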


Now we can give a description for the orbit numbers $y_{\lambda,\O_\lambda}$ when $\lambda$ is two-part partition.

\begin{prop}
Let $\lambda=(a,b)$ be a two-part partition and let the $p$-adic sums of the numbers $a-b$ and $b$ be $\sum_{i\geq0}x_ip^{i}$ and $\sum_{i\geq0}y_ip^{i}$, respectively.
Then
\[y_{\lambda,\O_\lambda}=\prod_{i\geq 0} \left ({x_i+2y_i-1\choose y_i}+\delta(x_i,y_i){x_i+2y_i-1\choose x_i+y_i+1-p}\right ),\]
where $\delta$ is the function defined as
\[\delta(x,y)=\begin{cases}
              1, &\text{ if $x<p-1$ and $x+y+1\geq p$,}\\
              0, &\text{ otherwise.}
\end{cases}
\]

\end{prop}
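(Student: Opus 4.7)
My plan is to factor $y_{\lambda,\O_\lambda}$ using the product formula Lemma~\ref{L: prod of dims} and then evaluate each factor using the two-row Specht dimension formula together with Lemmas~\ref{L:pweight} and~\ref{L: dim of two part}. By Lemma~\ref{L: prod of dims},
\[
y_{\lambda,\O_\lambda}=\prod_{i\geq 0}\dim_\F Y^{\lambda(i)},
\]
where $\lambda=\sum_{i\geq 0}p^i\lambda(i)$ is the $p$-adic expansion. Applying the recipe from Subsection~\ref{SS: composition} to $\lambda=(a,b)$, the hypotheses $\lambda_1-\lambda_2=a-b=\sum_i x_ip^i$ and $\lambda_2=b=\sum_i y_ip^i$ give $a_{i,1}=x_i$ and $a_{i,2}=y_i$, and hence
\[
\lambda(i)=(x_i+y_i,\,y_i)
\]
for every $i\geq 0$. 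It therefore suffices to show that for each $i$ the $i$-th factor on the right-hand side of the proposition equals $\dim_\F Y^{\lambda(i)}$.

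Next, I would read off the $p$-weight of $\lambda(i)$ from Lemma~\ref{L:pweight}. Setting $(c,d):=(x_i+y_i,y_i)$, we have $c-d=x_i$ and $c+1=x_i+y_i+1$, so $\lambda(i)$ has $p$-weight~$1$ exactly when $x_i<p-1$ and $x_i+y_i+1\geq p$, that is, exactly when $\delta(x_i,y_i)=1$; in that case the $p$-core is $(y_i-1,\,x_i+y_i+1-p)$ and $\kappa_p(\lambda(i))+(p)=(y_i+p-1,\,x_i+y_i+1-p)$. Otherwise $\lambda(i)$ is its own $p$-core. Lemma~\ref{L: dim of two part} then yields
\[
\dim_\F Y^{\lambda(i)}=\dim_\F S^{(x_i+y_i,\,y_i)}+\delta(x_i,y_i)\,\dim_\F S^{(y_i+p-1,\,x_i+y_i+1-p)}.
\]

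The remaining step is to identify each Specht dimension with the stated binomial coefficient. Both Specht modules are indexed by two-row shapes of total size $x_i+2y_i$, so the ballot-number identity $\dim_\F S^{(c,d)}=\binom{c+d}{d}-\binom{c+d}{d-1}$, combined with Pascal's rule, converts each dimension into a binomial coefficient with upper index $x_i+2y_i-1$. Specifically, the first Specht contributes $\binom{x_i+2y_i-1}{y_i}$ with lower index $y_i$, and the second Specht contributes $\binom{x_i+2y_i-1}{x_i+y_i+1-p}$ with lower index $x_i+y_i+1-p$. Taking the product of these equalities over $i$ then produces the claimed formula.

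I expect the main technical difficulty to lie in this last binomial bookkeeping: one must verify that the auxiliary correction terms generated by applying Pascal's rule to the ballot-number formula either cancel or vanish within the $p$-restricted digit ranges $0\leq x_i,y_i\leq p-1$, and, in the weight-one case, under the hypothesis $\delta(x_i,y_i)=1$. This is where the specific shape of $\delta$ and the $p$-restricted constraint should be essential for reducing each Specht dimension to a single binomial coefficient of the stated form.
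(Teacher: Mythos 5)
Your overall route is the same as the paper's: factor via Lemma \ref{L: prod of dims}, compute $\lambda(i)=(x_i+y_i,y_i)$ from the digit recipe, apply Lemmas \ref{L:pweight} and \ref{L: dim of two part}, and finish by evaluating two-row Specht dimensions (the paper cites the hook formula where you cite the equivalent ballot-number formula).

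The gap is exactly the step you flag, and it does not close: the ``auxiliary correction terms'' neither cancel nor vanish. Pascal's rule gives
\[
\dim_\F S^{(c,d)}=\binom{c+d}{d}-\binom{c+d}{d-1}=\binom{c+d-1}{d}-\binom{c+d-1}{d-2},
\]
so with $c=x_i+y_i$ and $d=y_i$ one gets $\dim_\F S^{(x_i+y_i,y_i)}=\binom{x_i+2y_i-1}{y_i}-\binom{x_i+2y_i-1}{y_i-2}$, and the subtracted term is nonzero whenever $y_i\geq 2$; the $p$-restricted ranges $0\leq x_i,y_i\leq p-1$ do not kill it. A concrete check: for $p=5$ and $\lambda=(3,2)$ we have $x_0=1$, $y_0=2$, $\delta(1,2)=0$, $P_\lambda$ trivial, so $y_{\lambda,\O_\lambda}=\dim_\F Y^{(3,2)}=\dim_\F S^{(3,2)}=5$, whereas $\binom{4}{2}=6$; likewise $p=3$, $\lambda=(2,2)$ gives $3$ against the formula's $4$. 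So the single-binomial form you are aiming for --- which is the form printed in the Proposition --- is simply not equal to the Specht dimension; the defect lies in the stated formula rather than in your reduction up to that point, and the paper's own proof (``using \dots\ Hook Formula'') silently skips the same verification. What your argument (and the paper's) actually establishes is
\[
y_{\lambda,\O_\lambda}=\prod_{i\geq 0}\Bigl(\dim_\F S^{(x_i+y_i,\,y_i)}+\delta(x_i,y_i)\dim_\F S^{(y_i+p-1,\,x_i+y_i+1-p)}\Bigr),
\]
and each factor should be left as a two-term ballot number $\binom{c+d}{d}-\binom{c+d}{d-1}$ (equivalently $\tfrac{c-d+1}{c+1}\binom{c+d}{d}$), not collapsed to a single binomial coefficient.
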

\begin{proof} Notice that the $p$-adic expansion of $(a,b)$ is $\sum_{i\geq 0}p^i(x_i+y_i,y_i)$. By Lemma \ref{L:pweight}, the partition $(x_i+y_i,y_i)$ has $p$-weight 1 if and only if $x_i<p-1$ and $x_i+y_i+1\geq p$. In this case, \[\kappa_p(\lambda(i))+(p)=(p+y_i-1,x_i+y_i+1-p).\] Otherwise, the $p$-weight of $(x_i+y_i,y_i)$ is zero. By Lemma \ref{L: dim of two part}, we have
\[\dim_\F Y^{\lambda(i)}=\dim_\F{S^{(x_i+y_i,y_i)}}+\delta(x_i,y_i)\dim_\F{S^{(y_i+p-1,x_i+y_i+1-p)}}.\] The proof is now complete using Lemma \ref{L: prod of dims} and Hook Formula for the dimension of a Specht module.
\end{proof}


\begin{thebibliography}{99}


\bibitem{JAlperin}J. L. Alperin, Local Representation Theory, Cambridge of Studies in Advanced Mathematics $\mathbf{11}$, Cambridge University Press, 1986.


\bibitem{MBroue} M. Brou\'{e}, On Scott modules and $p$-permutation modules: an approach through the Brauer morphism, Prac. Amer. Math. Soc. $\mathbf{93}$ (1985), no. 3, 401--408.

\bibitem{BrKu} J.~Brundan, J.~Kujawa, A new proof of the Mullineux conjecture,  J. Algebraic Combin. {\bf 18} (2003), no. 1, 13--39.

\bibitem{KErdmann} K. Erdmann, Young modules for symmetric groups, Special issue on group theory, J. Aust. Math. Soc. $\mathbf{71}$ (2001), 201--210.

\bibitem{FordKleshchev} B. Ford, A. Kleshchev, A proof of the Mullineux conjecture, Math. Z. $\mathbf{226}$ (1997), no. 2, 267--308.

\bibitem{EFJPAS} E. M. Friediander, J. Pevtsova, A. Suslin, Generic and maximal Jordan types, Invent. Math. $\mathbf{168}$ (2007), no. 3, 485--522.

\bibitem{GLW} E. Giannelli, K. J. Lim, M. Wildon, Sylow subgroups of symmetric and alternating groups and the vertex of $S^{(kp-p, 1^{p})}$ in characteristic $p$, J. Algebra $\mathbf{455}$ (2016), 358--385.


\bibitem{CGill} C. Gill, Young module multiplicities and classifying the indecomposable Young permutation modules, J. Algebra Appl. $\mathbf{13}$ (2014), no. 5, 1350147, 23 pp.

\bibitem{GJR} J. Grabmeier, Unzerlegbare Moduln mit trivialer Youngquelle und Darstellungstheorie der Schuralgebra,  Bayreuth. Math. Schr. No. 20 (1985), 9--152.

\bibitem{JGreen} J. A. Green, On the indecomposable representations of a finite group, Math. Z. $\mathbf{70}$ 1958/59 430--445.

\bibitem{DHDN} D. J. Hemmer, D. K. Nakano, Support varieties for modules over symmetric groups, J. Algebra, $\mathbf{254}$ (2002), no. 2, 422--440.

\bibitem{GJ1} G. D. James, The Representation Theory of the Symmetric Groups, Lecture Notes in Mathematics $\mathbf{682}$. Springer, Berlin, 1978.

\bibitem{GJ2} G. D. James, Trivial source modules for symmetric groups, Arch. Math. (Basel) $\mathbf{41}$ (1983), no. 4, 294--300.


\bibitem{GJ3} G. D. James, A. Kerber, The Representation Theory of the Symmetric Group,  Encyclopedia of Mathematics and its Applications $\mathbf{16}$. Addison-Wesley, 1981.

\bibitem{GMullineux} G. Mullineux, Bijections of $p$-regular partitions and $p$-modular irreducibles of the symmetric groups, J. London Math. Soc. (2) $\mathbf{20}$ (1979), no. 1, 60--66.

\bibitem{HNYT} H. Nagao, Y.  Tsushima, Representations of Finite Groups, Academic Press, Inc., Boston, MA, 1989.

\bibitem{Peel} M. H. Peel, Hook representations of the symmetric groups, Glasgow Math. J. $\mathbf{12}$ (1971), 136--149.

\bibitem{Scopes} J. Scopes, Cartan matrices and Morita equivalences for blocks of the symmetric groups, J. Algebra $\mathbf{142}$ (1991), 441--455.

\bibitem{WW} W. W. Wheeler, Generic module theory, J. Algebra $\mathbf{185}$ (1996), no. 1, 205--228.
\end{thebibliography}
\end{document}